\documentclass[12pt]{article}
\usepackage{amsmath,amssymb,amsfonts,amsthm}
\usepackage{eucal} 
\usepackage[dvips]{graphics}

\newcommand{\bK}{\mathsf{K}}

\newcommand{\Pp}{\mathbf{P}^2}
\newcommand{\bT}{\mathsf{T}}
\newcommand{\bA}{\mathsf{A}}

\newcommand{\cM}{M}
\newcommand{\Ms}{\mathcal{M}}
\newcommand{\cN}{\mathsf{Q}}
\newcommand{\cL}{\mathcal{L}}

\newcommand{\ba}{\mathbf{a}}
\newcommand{\bb}{\mathbf{b}}

\newcommand{\sdOm}{\deg\Omega}
\newcommand{\sw}{\mathsf{w}}

\DeclareMathOperator{\supp}{supp}

\DeclareMathOperator{\Prob}{Prob}

\newcommand{\lang}{\left\langle}
\newcommand{\rang}{\right\rangle}

\newcommand{\bS}{\mathsf{S}}

\newcommand{\Z}{\mathbb{Z}}
\newcommand{\R}{\mathbb{R}}

\newcommand{\cO}{\mathcal{O}}

\DeclareMathOperator{\Aut}{Aut}

\DeclareMathOperator{\ind}{ind}
\DeclareMathOperator{\Jac}{Jac}
\DeclareMathOperator{\Tor}{Tor}
\DeclareMathOperator{\Img}{Im}

\newtheorem{Theorem}{Theorem}
\newtheorem{Lemma}{Lemma}

\theoremstyle{definition}

\begin{document}
\title{Noncommutative geometry of random surfaces}
\author{Andrei Okounkov}
\date{} \maketitle

\section{Introduction}

\subsection{}

This paper is about a certain interaction between probability and geometry. The random objects 
involved will be random stepped surfaces spanning a given boundary in $\R^3$. Equivalently, one 
can talk about random rhombi tiling of a planar domain or random dimer coverings of certain subgraphs
of the hexagonal graph. Probabilistic questions about 
these random surfaces  will be answered in terms of a nonrandom algebraic object, geometrically 
a curve in a noncommutative plane. 

Underlying this connection is Kasteleyn's theory of planar dimers which computes all probabilities
in terms of the Green's function of a certain finite-difference operator $\bK$. It will be clear 
from our construction that the connection between finite-difference operators and noncommutative
geometry may be easily extended far beyond what we do in this paper. Our goal here, however, 
is to explain a certain phenomenon in the least possible generality and to stay as close as
possible to certain specific applications. These applications, as well as some other directions
that look promising will be discussed below.

\subsection{}

Let $\Omega$ be a simply-connected planar domain that can be tiled by rhombi as shown in 
Figure \ref{Fig1}. Well-known bijections illustrated in Figure \ref{Fig1} identify such tilings
with dimer coverings of subgraph $\Omega_6=\Omega \cap \Gamma_6$ of the hexagonal graph $\Gamma_6$
and also with \emph{stepped
surfaces} spanning given boundary. An introduction to dimers and stepped surfaces may be 
found in \cite{KenLec}.

\begin{figure}[!hbtp]
  \centering
\scalebox{0.3}{\includegraphics*[55,230][570,675]{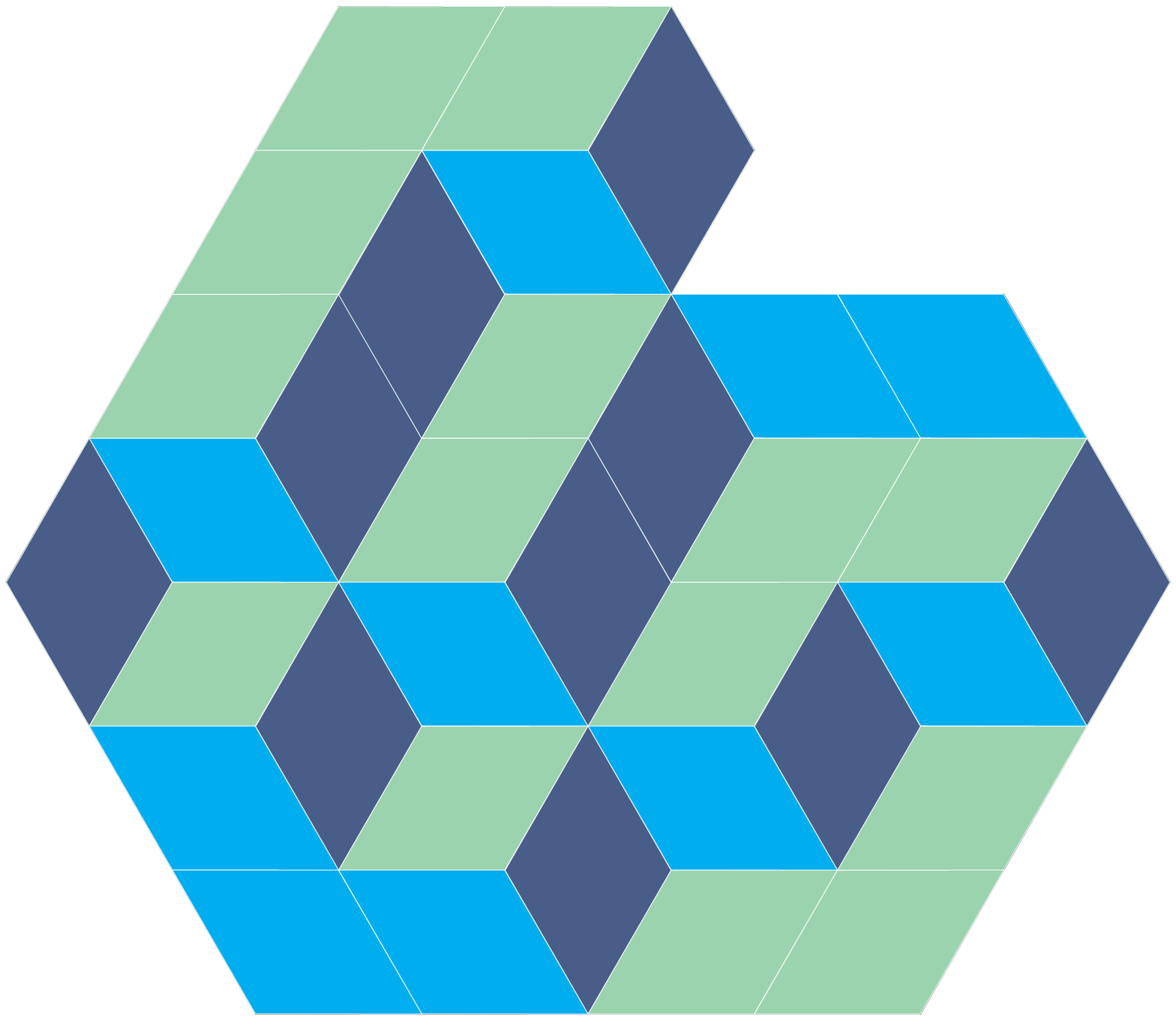}}
\qquad
\scalebox{0.3}{\includegraphics*[30,240][545,685]{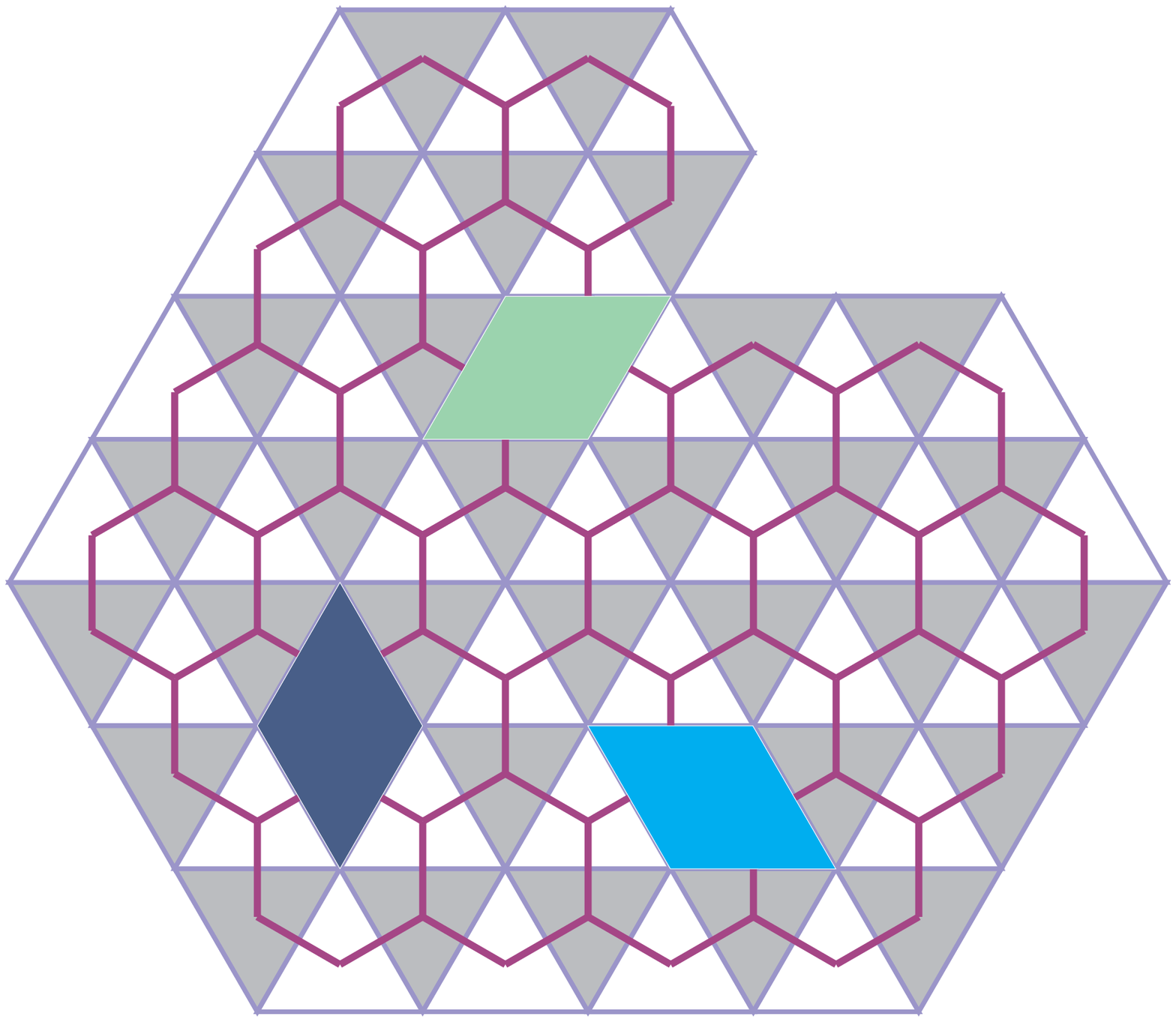}}
\caption{\small{Stepped surfaces are the same as tilings of a planar domain $\Omega$ by rhombi. Each tile 
is a union of a black and white triangle. The adjacency graph of the triangles is (a piece of) the 
$6$-gonal graph.}} 
  \label{Fig1}
\end{figure}

Stepped surfaces arise in mathematical physics in a variety of contexts: from 
simply-minded, but realistic models of interfaces (e.g.\ crystalline surfaces) to the sophisticated setting of
super-symmetric gauge and string theories (see e.g.\ \cite{Takagi,EC} for an 
introduction). 

In all applications, it is natural to weight the probability of a stepped surface $S$
by the volume $V(S)$ enclosed by it, i.e. to set
\begin{equation}
\Prob(S)\propto q^{V(S)}\,, \label{Probq}
\end{equation}
where $q>0$ is a parameter.  Note that for two surfaces $S_1$ and $S_2$ 
spanning the same boundary the difference $V(S_1)-V(S_2)$ is well-defined,
which is all that matters in \eqref{Probq}. In the crystal surface context, 
$\log q$ is the energy price for removing an atom\footnote{Note that there is a well-known ambiguity 
in reconstructing 3-dimensional surfaces from tiling, namely, one can switch the roles of convex and concave corners.
A rotation by $\pi/3$ interchanges the choices. Our conventions are fixed by \eqref{qK3}.}

\subsection{}

We will be particularly 
interested in the case when $\Omega$ grows to infinity, while keeping its shape, that is, 
the number and orientation of its boundary segments. We will refer to such domains 
as \emph{polygonal}. The study of a random stepped surface a large polygonal boundary 
(equivalently, random dimer covering, or 
random tiling) leads to interesting probabilistic questions. The nontriviality of these
questions may be appreciated by looking at  Figure \ref{Fig2}. 

\begin{figure}[!hbtp]
  \centering
\scalebox{0.35}{\includegraphics*[45,145][570,747]{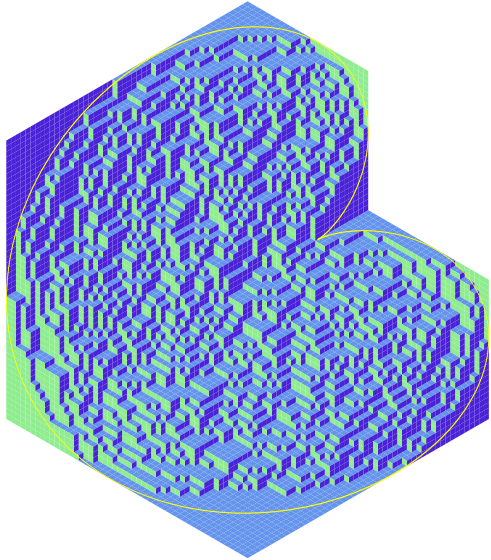}}
\caption{\small{A simulation of the limit shape formation. The curve separating the ordered regions (facets) from the disorder
is called the \emph{frozen boundary}. Here, it is a cardioid, i.e.\ the dual of 
a rational cubic.}} 
  \label{Fig2}
\end{figure}

Apparent in Figure \ref{Fig2} is a formation of a certain nonrandom \emph{limit shape}, 
in other words, as the mesh size goes to zero so does the scale of randomness. 
This is a form of the law of large numbers. The existence of a limit shape 
was proven for stepped surfaces (with arbitrary boundary conditions) by 
H.~Cohn, R.~Kenyon, and J.~Propp in \cite{CKP}. 

In \cite{Burg}, the limit shape for polygonal boundaries 
was linked to a certain plane algebraic curve $Q$. In particular, the frozen boundary, which is the curve 
separating order from disorder in Figure \ref{Fig2}, is the planar dual of $Q$ in exponential 
coordinates.

\subsection{}

The main object of this paper may be characterized, informally, as a \emph{quantization} 
of the limit shape, or of the curve $Q$ to be more specific. This quantization
 exists for finite $\Omega$, that is, before any limits are taken. In particular, it captures 
not just the limit shape but also 
the \emph{fluctuations} of our random surfaces. Viewed like this, it should not be 
surprising to see noncommutative curves appear. Note, however, that technically the
noncommutativity will be linked to the parameter $\log q$ and not to the size of 
fluctuations (as could be expected from the uncertainty principle).

\subsection{}

In principle, classical Kasteleyn's theory \cite{Kas} answers all possible questions about random 
stepped surfaces in terms of the Green's function, i.e.\ the inverse of a certain 
difference operator.  This \emph{Kasteleyn operator} $\bK$ is a
weighted adjacency matrix of the graph $\Omega_6$.

 Note that $\Gamma_6$, and hence, 
$\Omega_6$ is bipartite, that is, its vertices may be colored in two colors (black and 
white, traditionally) so that only vertices of opposite color are joined by an edge. 
This is reflected in Figure \ref{Fig3}. We will index the rows (resp.\ columns) of the 
adjacency matrix by white (resp.\ black) vertices. The nonzero matrix elements $\bK_{ij}$
should satisfy 
\begin{equation}
  q \, \bK_{21} \bK_{43} \bK_{65} = \bK_{23} \bK_{45} \bK_{61} \label{qK3}
 \end{equation} 
for each face $F$, where $v_1,\dots,v_6$ are the six vertices going around $F$ as in 
Figure \ref{Fig3}. This fixes $\bK$ uniquely up to a certain gauge transformation, namely 
left and right multiplication by a diagonal matrix. 
\begin{figure}[!hbtp]
  \centering
\scalebox{0.75}{\includegraphics*[320,350][475,515]{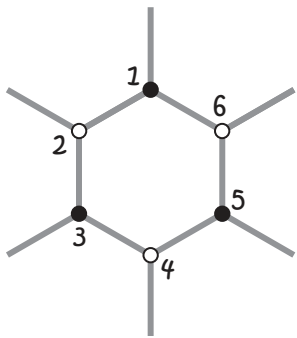}}
\caption{\small{Each hexagon face carries $\log q$ units of magnetic flux.}} 
  \label{Fig3}
\end{figure}

The goal of this paper may be informally described as looking for some hidden structures
in the inverse matrix $\bK^{-1}$. Certain structures in $\bK^{-1}$ are plain to see: by definition,  
the entries of $\bK^{-1}$ satisfy a finite-difference equation in each index, namely
$\bK \, \bK^{-1} = \bK^{-1} \, \bK = 1$. 

Our main claim is that for polygonal domains $\Omega$ the entries of $\bK^{-1}$ satisfy 
\emph{additional} finite-difference equations. The degree of these additional 
equations is determined by the shape of $\Omega$, that is, by the number of 
boundary segments, and not by the size of $\Omega$. This is crucial from 
the probabilistic viewpoint. 

\subsection{}

The noncommutative geometry of the title provides a natural language to state and 
study these additional equations. 

The origin of the noncommutativity may be traced to \eqref{qK3}. For $q=1$, the adjacency 
matrix of $\Gamma_6$ is an obvious solution and this solution is translation-invariant, i.e.\
commutes the the subgroup $\Z^2\subset \Aut(\Gamma_6)$ acting by (bipartition-preserving) 
translations. 

For $q\ne 1$, the translation-invariant equation \eqref{qK3} has no translation-invariant solutions, 
which means that the Kasteleyn operator $\bK$ now commutes with \emph{magnetic translations}, i.e.\
translations followed by a gauge transformation. In turn, magnetic translations commute only up to 
a factor  (whose logarithm is proportional to the area of the parallelogram spanned by the 
translation vectors). They form, in other words, an algebra known as the quantum 2-torus. 

The considerations so far apply only to the whole 6-gonal graph $\Gamma_6$, that is, in the 
absence of any boundaries. Somewhat remarkably, however, a certain framework may be established 
in which the Kasteleyn operator and the commuting magnetic translations act in a way 
compatible with polygonal boundaries. This involves compactifying the quantum 2-torus to a 
noncommutative plane, meaning that one introduces a certain graded algebra $\bA$, 
a deformation of the ring of polynomials in $x_1,x_2,x_3$, such that the quantum
$2$-torus is the degree $0$ part of $\bA\left[(x_1 x_2 x_3)^{-1}\right]$. 

\subsection{}

For any fixed white vertex $\sw\in\Omega_6$, the  action of magnetic translation on 
$\bK^{-1}(\,\cdot\,,\sw)$, that is, on the corresponding column of $\bK^{-1}$, 
yields a graded $\bA$-module $\cN^\sw$.  
The additional equations satisfied by $\bK^{-1}$ will be reflected in the fact that $\cN^\sw$ is a \emph{torsion}
module. 

For different $\sw$, the modules $\cN^\sw$ share the same fundamental features. In fact, 
there is a canonical submodule $\cN$ in all of them that depends on $\Omega$ only and 
captures the essential information.

\subsection{}

The construction of the module $\cN$ and the study of its basic properties will occupy the 
bulk of the present paper. While the definition of $\cN$ involves 
nothing beyond elementary combinatorics and linear algebra, we will find 
the resulting object 
has a certain depth and complexity.

The degrees of its generators and relations 
(and hence the degrees of the additional 
equations satisfied by $\bK^{-1}$) are determined by the combinatorics of 
the domain $\Omega$ only, see Theorem \ref{Th1}. 
On the other hand, the explicit form of these relations depends of $q$ and 
the geometry of $\Omega$ in a rather intricate fashion.

\subsection{}

This is where a geometric way of thinking about such modules, pioneered by M.~Artin and 
his collaborators, becomes essential (see e.g.\ \cite{SV} for an introduction). While its a matter of definitions to associate to 
$\cN$ a sheaf on the noncommutative plane, the geometric intuition thus gained 
is very valuable. 

In the first place, this is what allows us to view $\cN$ 
as a quantization of the limit shape $Q$. In fact, quantization requires additional degrees of 
freedom, parametrized by line bundles (and more general rank $1$ sheaves) on $Q$. These may be compared to complex phases in quantum 
mechanics. 

Certain features of $Q$, such as its points of intersection with the coordinate axes of $\Pp$
have a direct quantum analog satisfied by $\cN$, see Section \ref{sbdcN}. Others (such as 
the geometric genus) are harder to generalize, see \cite{KO}. 

\subsection{}

Because $\Omega$ is a purely combinatorial object, we can modify it by simply moving 
boundary segments in and out. We prove in Section \ref{srenorm} that these transformations
act on $\cN$ by what may be called a noncommutative shift on the Jacobian. In particular, this describes what happens to $\cN$ under rescaling of $\Omega$.

\subsection{Acknowledgments}

The author's present understanding of the subject took some time to develop and 
I have a large number of people to thank for stimulating and insightful discussions along the way. In particular, I thank D.~Eisenbud, V.~Ginzburg, A.\ J.\ de Jong, 
R.~Kenyon, I.~Krichever, 
D.~Maulik, N.~Nekrasov, E.~Rains, N.~Reshetikhin, and Y.~Soibelman. Parts of this 
work grew into joint research projects \cite{KO,RO}.

I had an opportunity 
to lecture on the subject 
on a number of occasions, in particular during the Aisenstadt lectures
at the Universit\'e de Monr\'eal, 
T.~Wolff lectures at Caltech, 
Eilenberg lectures at the Columbia
University, and Milliman lectures at the University of Washington. 
I am very grateful to the participants of these lectures for 
their involvement and feedback. I very much 
thank these institutions and the Institut
des Hautes \'Etudes Scientifiques for their warm hospitality during my work 
on this paper.

\section{The quantum limit shape}

\subsection{}

Let $\bA$ be the algebra generated by $x_1,x_2,x_3$ subject to the 
relations 
\begin{equation}
  \label{commR}
    x_{j} x_i = q_{ij} \, x_i x_{j}\,,  
\end{equation}
where $q_{ii}=1$ and $q_{ij}=q_{ji}^{-1}$. This is a basic example of a noncommutative projective plane $\Pp$, 
see e.g.\ \cite{SV} for an introduction. Adding the inverses $x_i^{-1}$ 
of the generators, we obtain a larger algebra $\bT$ known as a 
noncommutative 3-torus. 

Both $\bA$ and $\bT$ are graded by the 
total degree in all three generators.
Let $\bT_d$, where $d\in \Z$ is arbitrary, be one of the graded components. The monomials
$$
x^a = x_1^{a_1} x_2^{a_2} x_3^{a_3} \in \bT_d\,,
$$ 
obviously correspond to lattice points $a\in\Z^3$ lying in the 
plane $a_1+a_2+a_3 = d$. Their projection in the $(1,1,1)$ direction, that 
is, their image in $\R^3/\R\cdot(1,1,1)\cong \R^2$ may be identified with 
the black vertices of $\Gamma_6$. In the same fashion, 
the monomials in $\bT_{d+1}$ are put into bijection with the white vertices
of $\Gamma_6$. This is illustrated for $d=4$ in Figure \ref{Fig4}. 

\begin{figure}[!hbtp]
  \centering
\scalebox{0.4}{\includegraphics*[123,95][478,450]{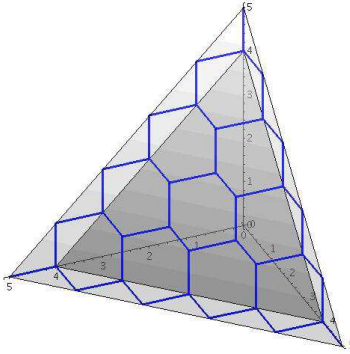}}
\vspace{0 cm}
\caption{\small{Black and white vertices may be identified with points of $\Z^3$ lying on two parallel 
hyperplanes and thus with monomials of two successive degrees. Here we only plot points in $\R_{\ge 0}^3$
which correspond to polynomials in the $x_i$'s.}} 
  \label{Fig4}
\end{figure}

\subsection{}

Consider the operator 
$$
\bK: \bT_d \to \bT_{d+1}
$$
given by right multiplication by $x_1 +x_2 +x_3 \in W_1$, that is, 
$$
\bK f = f \cdot (x_1 +x_2 +x_3) \,.
$$
One easily checks from the commutation relations \eqref{commR} that, indeed, in the 
basis of monomials, this is 
a $q$-weighted Kasteleyn operator for $\Gamma_6$ as above, with 
\begin{equation}
  \label{qqq}
    q = q_{12} \, q_{23} \, q_{31} \,.  
\end{equation}
Note that while there is no canonical ordering of the variables and, hence, no 
canonical normalization of a monomial, the lines spanned by monomials in $\bT$ 
are well defined. This is all we need since we only care about $\bK$ modulo 
gauge transformations. 


\subsection{The module $\cM$}

\subsubsection{}

Monomials are normal in $\bT$, that is, $\bA \, x^c = x^c \, \bA$ is naturally a 
$\bA$-bimodule. Assuming $\deg x^c \le 1$, monomials in $(\bA \, x^c)_1$ form a triangle with vertices 
$$
x_1^{c_1} x_2^{c_2} x_3^{-c_1-c_2+1}\,, x_1^{c_1} x_{2}^{-c_1-c_3+1} x_{3}^{c_3}\,,
x_1^{-c_2-c_3+1} x_{2}^{c_2} x_{3}^{c_3} \, \in \bT_1 \,. 
$$
We denote this equilateral triangle by $\Delta(c)$ and call it the \emph{support} of $(\bA \, x^c)_1$\,.

\subsubsection{}

We think of black and white vertices in $\Omega_6$ as monomials in $\bT_0$ and $\bT_1$, 
respectively. Write $\Omega$ as a set-theoretic combination of triangles 
\begin{equation}
  \label{OmTr}
  \Omega= \bigcup_i \Delta(\ba_i) \setminus \bigcup_j \Delta(\bb_j)\,, \quad \ba_i,\bb_j\in \Z^3 \,,
\end{equation}
as in Figure \ref{Fig5}. The inclusion of triangles induces the inclusion of graded $\bA$-bimodules 
\begin{equation}
  \label{inclusion}
  \bigoplus \bA \, x^{\bb_i} \subset \bigoplus \bA \, x^{\ba_i} \,.
\end{equation}
We denote by $\cM$ the quotient $\bA$-bimodule in \eqref{inclusion}. Define an endomorphism $\bK$ of $\cM$ as 
the right multiplication by $x_1+x_2+x_3 \in \bA_1$. This is a map of left $\bA$-modules. 
By construction, the operator 
$$
\bK_0: \cM_0 \to \cM_1 
$$
is the Kasteleyn operator for $\Omega_6$.

\begin{figure}[!hbtp]
  \centering
\scalebox{0.3}{\includegraphics{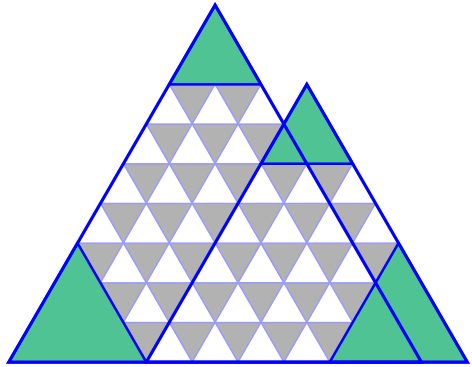}}
\vspace{0 cm}
\caption{\small{The domain $\Omega$ may be represented as a difference of unions of triangles. Here, it is the union of 2 blue triangles minus 
the union of 4 green ones.}}
  \label{Fig5}
\end{figure}

\subsubsection{}

The idea of \emph{stable range} will play an important part in this paper. By definition, 
a number $i$ is in the stable range for $\cM$ if the domain 
$$
\Omega(i) = \supp \cM_{i+1}
$$
has the 
same combinatorics as $\Omega=\Omega(0)$. In particular, the length of the shortest 
white boundary of $\Omega$, i.e.\ a boundary formed by white triangles, gives an 
upper bound on the stable range. 

More generally, refining \eqref{OmTr}, $\cM$ may be 
given a resolution 
\begin{equation}\label{resM}
0\to F_3 \to F_2 \to F_1 \to F_0 \to \cM \to 0 
\end{equation}
by modules of the form $\bigoplus \bA \, x^{\ba_i}$, where the maps are the natural inclusions. 
Here $F_0=\bigoplus \bA \, x^{\ba_i}$ corresponds to the triangles $\Delta(\ba_i)$ in \eqref{OmTr}. The intersections 
among $\bA \, x^{\ba_i}$ 
together with $\Delta(\bb_i)$ contribute to relations $F_1$, and so on.
The module $F_0$ doesn't have generators of positive degree,  but the other $F_i$'s do. 
The stable range is until the first such generator appears. 

Note that the stable range scales linearly with $\Omega$, meaning that it scales like 
the inverse mesh size in the probabilistic setup. Throughout the paper, we will only 
be interested in what happens in the stable range.

\subsubsection{}

Our next goal is to compute the Hilbert function of $\cM_d$ in the stable range. 
For that, we will make a genericity assumption that 
all 3 boundary slopes appear in the boundary $\partial \Omega$ of $\Omega$ in a cyclic order. 
We will denote by $\deg\Omega$ the number 
of times $\partial \Omega$ cycles through the three slopes. For example, $\deg\Omega=3$
in Figure \ref{Fig6}. 

\begin{Lemma}\label{lemdimM} For $d\ge 0$ in the stable range, we have 
$$
\dim \cM_d = \dim \cM_0 - \deg\Omega \, \frac{d(d-1)}{2} - d \, \ind \bK_0 \,.
$$
\end{Lemma}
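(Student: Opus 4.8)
The plan is to compute $\dim \cM_d$ by an Euler-characteristic argument using the resolution \eqref{resM}, reducing everything to the Hilbert functions of the building blocks $\bA\,x^{\ba}$, and then to identify the linear-in-$d$ term with $\ind \bK_0$ by a separate small argument. Since Hilbert function is additive on short exact sequences, from \eqref{resM} we get
\begin{equation*}
\dim \cM_d = \sum_{i=0}^{3} (-1)^i \dim (F_i)_d = \sum_i (-1)^i \sum_{\ba \in S_i} \dim \bigl(\bA\, x^{\ba}\bigr)_d\,,
\end{equation*}
where $S_i$ indexes the triangles making up $F_i$. So the first step is to record $\dim(\bA\,x^{\ba})_d = \#\bigl(\Delta(\ba) \text{ scaled by } d\bigr) = \binom{d+2}{2}$ for $d$ large enough that no lattice points are lost — i.e. for $\ba$ with $\deg x^{\ba}\le 1$ this is exact for all $d\ge 0$, and in general it holds in the stable range by definition of stable range (this is exactly the condition that $\Omega(d)$ has the same combinatorics as $\Omega$, so none of the inclusions develop extra generators).

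**Second**, I would plug in $\binom{d+2}{2} = \frac{(d+1)(d+2)}{2} = \binom{d}{2} + \text{(linear and constant in }d)$ and collect terms. The key combinatorial input is bookkeeping of the multiset $\{\ba_i\} \sqcup \{\bb_j\}$ (with signs) coming from the set-theoretic decomposition \eqref{OmTr} of $\Omega$: the number of building blocks, counted with sign $(-1)^i$ from the resolution, is a topological invariant of $\Omega$ (it is $\chi$ of the nerve, which is $1$ since $\Omega$ is simply connected), and this forces the $\binom{d}{2}$-coefficient — i.e. the coefficient of the leading quadratic piece — to be governed purely by boundary data. The claim is that this coefficient equals $-\deg\Omega$; the cleanest way to see it is to note that $\dim\cM_{d+1} - 2\dim\cM_d + \dim\cM_{d-1}$, the second difference, equals $-\deg\Omega$ because each step $d \rightsquigarrow d+1$ shrinks $\Omega(d)$ inward by one lattice layer along each of the three families of edges, and the net loss increments by the number of "corners" of the boundary, which is exactly $\deg\Omega$ (the number of cyclic passes through the three slopes). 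One should phrase this as: passing from $\cM_d$ to $\cM_{d+1}$ amounts to applying $\bK$-like shift, and the support loses a fixed number of points per unit of $d$ plus a correction that grows linearly, with the linear rate being $\deg\Omega$.

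**Third**, I would pin down the linear coefficient. Having shown $\dim\cM_d = \dim\cM_0 - \deg\Omega\,\frac{d(d-1)}{2} + c\,d$ for some constant $c$ depending on $\Omega$ and $q$, it remains to identify $c = -\ind\bK_0 = -(\dim\cM_0 - \dim\cM_1)$. Evaluating at $d=1$ gives $\dim\cM_1 = \dim\cM_0 + c$, hence $c = \dim\cM_1 - \dim\cM_0 = -\ind\bK_0$ immediately, \emph{provided} $d=1$ lies in the stable range (which it does, since the stable range is at least $1$ for any genuine polygonal $\Omega$ — the shortest white boundary has length $\ge 1$). So the linear term is essentially free once the quadratic term is established. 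Note $\bK$ enters only through its index at level $0$, as it must, since $\bK$ is an \emph{iso} onto its image away from the kernel/cokernel and the whole computation is really about supports.

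**The main obstacle** is the second step: rigorously extracting the coefficient $-\deg\Omega$ of the quadratic term from the signed multiset of triangles, i.e. showing that $\sum_i (-1)^i \sum_{\ba\in S_i}\binom{d+2}{2}$ has quadratic part $-\deg\Omega\binom{d}{2}$. Two subtleties: (a) one must be careful that the formula $\dim(\bA\,x^{\ba})_d = \binom{d+2}{2}$ can fail for the auxiliary triangles $\Delta(\bb_j)$ and higher syzygies when $\deg x^{\ba} > 1$, since then part of the scaled triangle pokes outside the positive octant / the relevant range — this is precisely what "stable range" is designed to exclude, so I would open with a clean statement that in the stable range every building block contributes its full $\binom{d+2}{2}$; (b) the identification of the signed count of corners with $\deg\Omega$ requires matching the combinatorial definition of $\deg\Omega$ (cyclic passes through slopes) with the alternating sum — I expect this to come down to a direct inspection of what the inclusions \eqref{inclusion} do near each boundary vertex, essentially local at each of the $3\deg\Omega$ "turning points" of $\partial\Omega$, contributing $-1$ each to the second difference. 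I would carry this out by the second-difference reformulation above rather than a global alternating sum, as it localizes the computation to the boundary and makes the role of $\deg\Omega$ transparent.
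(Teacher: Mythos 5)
The skeleton of your argument matches the paper's: compute the second difference of $\dim\cM_d$, show it equals $-\deg\Omega$, and then fix the linear and constant terms by evaluating at $d=0,1$. That last step is carried out correctly and is exactly what the paper does ("dots stand for a polynomial in $d$ of degree $\le 1$, which is uniquely fixed by its evaluation at $d=0,1$"). The paper's actual route to the second difference is the multiplication map $f\mapsto x_3 f\colon\cM_d\to\cM_{d+1}$, whose kernel (resp.\ cokernel) is supported on black (resp.\ white) horizontal boundary strips; white strips shrink by $1$ per step and black strips grow by $1$ per step, so each of the $\deg\Omega$ horizontal boundary segments contributes $-1$ to the second difference.

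Your route to the quadratic coefficient, however, has a genuine gap in both of the ways you sketch it. The alternating-sum version does not work as stated: the signed count of building blocks $\sum_i(-1)^i\lvert S_i\rvert$ is \emph{not} $\chi(\Omega)=1$. Try the hexagon $\Omega=\Delta(\ba)\setminus(\Delta(\bb_1)\cup\Delta(\bb_2)\cup\Delta(\bb_3))$ with the $\bb_j$ disjoint: the resolution is $0\to\bigoplus_j\bA x^{\bb_j}\to\bA x^\ba\to\cM\to 0$, so the signed count is $1-3=-2=-\deg\Omega$. The identity you need is precisely $\sum_i(-1)^i\lvert S_i\rvert=-\deg\Omega$, which is what you were trying to prove, not a free consequence of simple connectivity. (You also write $\dim(\bA x^\ba)_d=\binom{d+2}{2}$; this is $\binom{d-\deg\ba+2}{2}$ in general, and the degree shifts matter for the linear term.) Your second-difference narrative is closer to the paper but misdescribes the geometry: passing from $\cM_d$ to $\cM_{d+1}$ does not uniformly shrink $\supp\cM$ inward along all three families — white boundaries move in while black boundaries move out — and you waver between attributing $-1$ per ``corner'' ($\deg\Omega$ of them) and $-1$ per ``turning point'' ($3\deg\Omega$ of them). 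The clean resolution of this ambiguity is the paper's: fix one direction, use the single coordinate map $\cdot\,x_3$, and count only the $\deg\Omega$ horizontal boundary segments, each of which (shrinking or expanding) contributes exactly $-1$ to the second difference.
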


Here, of course, the index of $\bK_0$ equals zero, but it will not vanish in 
the generalizations considered below. 
 
\begin{proof}
Consider the map
\begin{equation}
 \cM_d \owns f \mapsto x_3 f \in \cM_{d+1}\,. \label{mapx3}
\end{equation} 
The kernel and cokernel of this map are formed by functions
supported on horizontal strips of the form shown in Figure \ref{Fig6}. More precisely,
white horizontal boundaries correspond to the cokernel, the black ones --- to the kernel.

\begin{figure}[!hbtp]
  \centering
\vspace{0.33 cm}
\scalebox{0.28}{\includegraphics*[32,240][545,438]{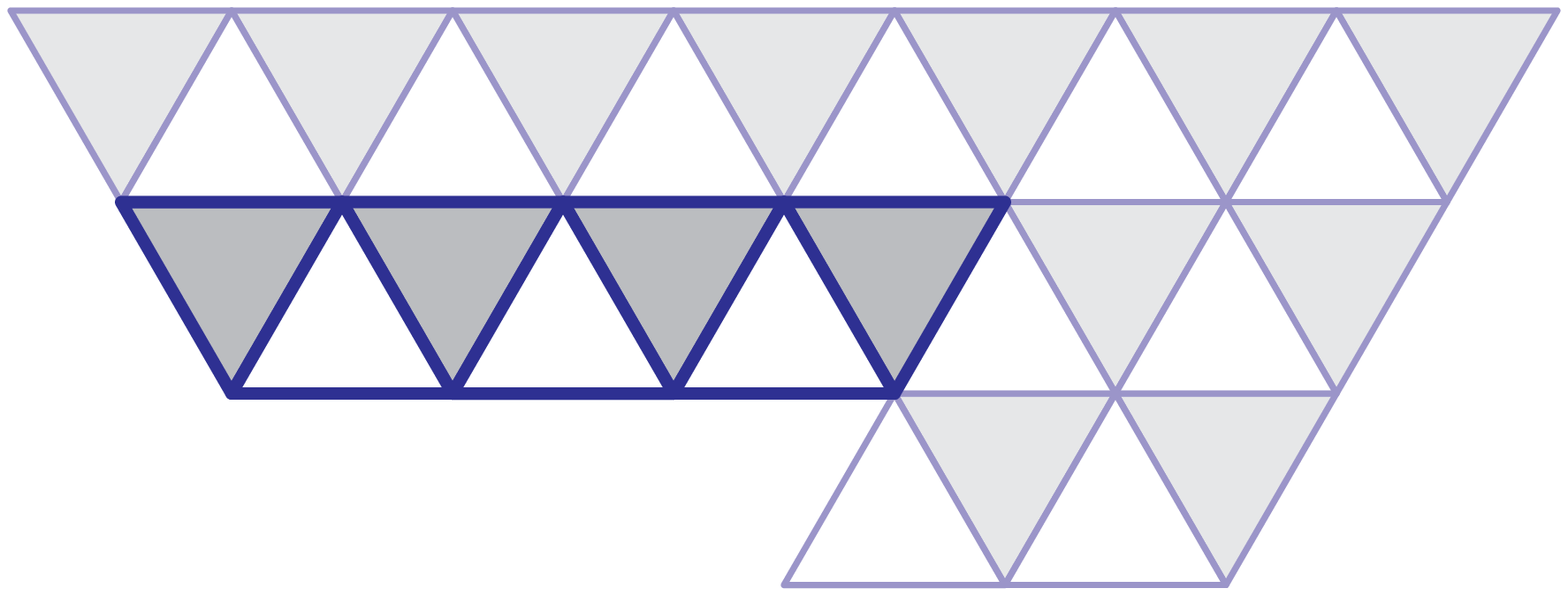}}
\quad
\scalebox{0.28}{\includegraphics*[15,355][560,548]{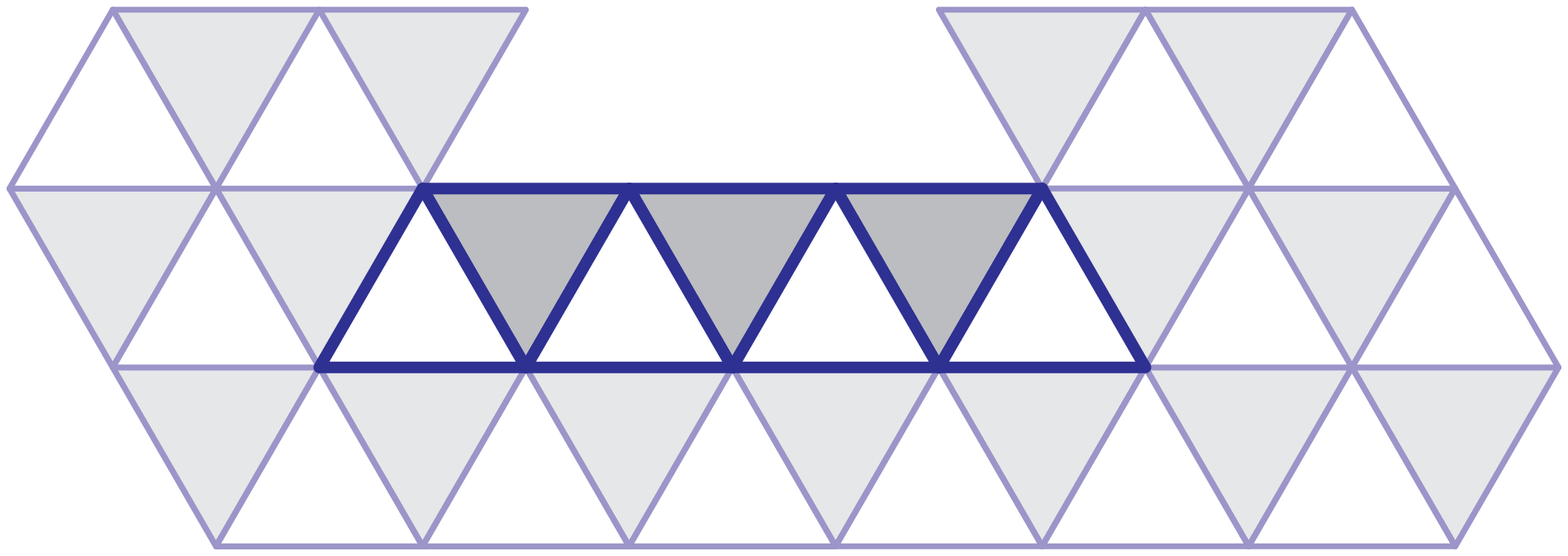}}
\vspace{0.33 cm}
\caption{\small{A white (left) and black (right) boundary strips. Note that they always have the same shape provided 
the boundary of $\Omega$ cycles through the 3 directions.}}
  \label{Fig6}
\end{figure}

Note that white boundaries of $\supp \cM_i$ shrink with $i$, while the black 
ones expand. Thus each of the $\deg\Omega$ horizontal boundaries contributes 
$-1$ to the second derivative of $\dim \cM_i$ with respect to $i$. We conclude 
$$
\dim \cM_d = - \deg\Omega \, \frac{d^2}{2} + \dots\,,\,.
$$
Here dots stand for a polynomial in $d$ of degree $\le 1$, which is uniquely fixed by 
its evaluation at $d=0,1$. 
\end{proof}

\subsubsection{}
\begin{Lemma}\label{lsurK} 
The operator $\bK: \cM_d \to \cM_{d+1}$ is surjective for $d\ge 0$ in the stable range and 
$q>0$ or $q$ generic. 
\end{Lemma}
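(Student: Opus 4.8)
The plan is to reduce surjectivity of $\bK:\cM_d\to\cM_{d+1}$ to a dimension count combined with injectivity of a related map, exploiting the fact that right multiplication by $x_3$ and right multiplication by $x_1+x_2+x_3$ differ by right multiplication by $x_1+x_2$, which shifts supports in a controlled combinatorial way. First I would record the Hilbert function: by Lemma \ref{lemdimM}, in the stable range $\dim\cM_{d+1}-\dim\cM_d = -\sdOm\, d - \ind\bK_0$, so surjectivity of $\bK$ in degree $d$ is equivalent (given $\dim\cM_{d+1}<\infty$) to the statement that $\dim\Ker(\bK\colon\cM_d\to\cM_{d+1}) = \dim\cM_d-\dim\cM_{d+1} = \sdOm\, d + \ind\bK_0$. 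So it suffices to produce an upper bound on the kernel of $\bK$ of exactly this size, i.e.\ to show $\dim\Ker\bK \le \sdOm\, d$ (using $\ind\bK_0=0$ in the present setting; the $\ind\bK_0$ term is what makes the argument survive the later generalizations).

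Next I would analyze the kernel directly. An element $f\in\cM_d$ with $f\cdot(x_1+x_2+x_3)=0$ in $\cM_{d+1}$ means that $f\cdot(x_1+x_2+x_3)$ lifts into the submodule $\bigoplus\bA\,x^{\bb_j}$ of the resolution, i.e.\ it is supported on the union of the excised triangles $\Delta(\bb_j)$ in degree $d+1$. The key point is locality: the equation $\bK f=0$ is a three-term recurrence along each lattice line in the $x_3$-direction (this is exactly the content of $\bK f = f\cdot x_3 + f\cdot(x_1+x_2)$ read monomial-by-monomial), so $f$ is determined on each such line by its values near one end, and the constraint that $f$ be genuinely supported inside $\Omega(d)=\supp\cM_{d+1}$ rather than spilling over a boundary edge. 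Running this recurrence from the boundary, one sees that a kernel element is pinned down by finitely many parameters, one batch for each of the $\sdOm$ boundary "corners" where the three slopes cycle, and each batch contributes at most $d$ free parameters because the boundary strips in Figure \ref{Fig6} grow linearly. This yields $\dim\Ker\bK\le\sdOm\, d$, which is the bound we wanted. Comparing with the Hilbert-function computation forces equality, hence surjectivity.

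The role of the hypothesis "$q>0$ or $q$ generic" enters at the step where the three-term recurrence is solved: the coefficients appearing are monomials in the $q_{ij}$, and one needs that certain leading coefficients (and certain determinants of the boundary-to-boundary transfer maps) are nonzero so that the recurrence can actually be propagated and the parameter count is not accidentally larger. For $q$ generic this is automatic, and for $q>0$ one checks positivity forces non-vanishing. I would isolate this as a short lemma: the transfer matrix along each horizontal line is invertible, and the induced map from the "incoming" boundary data to the "outgoing" boundary data has maximal rank.

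The main obstacle I expect is making the combinatorial bound $\dim\Ker\bK\le\sdOm\, d$ fully rigorous, in particular correctly matching the local recurrence analysis to the global description of $\cM$ via the resolution \eqref{resM} and the excised triangles $\Delta(\bb_j)$ — i.e.\ making sure a solution of the recurrence that is consistent locally on every line actually descends to a well-defined element of the quotient $\cM_d$, with no hidden obstructions or overcounting at the interfaces between the triangles $\Delta(\ba_i)$. Equivalently, one must be careful that the "$d$ parameters per corner" are independent and that no corner interactions reduce the count below $\sdOm\, d$ (which would break the dimension match). An alternative, possibly cleaner route — which I would try if the direct kernel estimate gets unwieldy — is to prove surjectivity by a deformation/semicontinuity argument: establish it at a convenient point (e.g.\ a degenerate configuration of the triangles where $\cM$ decomposes and $\bK$ is visibly surjective on each piece), note that surjectivity is an open condition on $q$ and on the combinatorial data within a fixed stable range, and conclude for generic $q$; then handle $q>0$ by continuity of the relevant determinants together with a positivity (Kasteleyn-type sign) argument ruling out vanishing on the positive locus.
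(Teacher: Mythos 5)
The paper's argument is a clean two-step induction that your proposal does not recover, and the step it isolates as the hard base case is precisely the point where your sketch has a genuine gap.

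The paper first disposes of $d=0$ by invoking Kasteleyn's theorem: $\det\bK_0$ equals the $q$-weighted count of stepped surfaces of $\Omega$, which is manifestly positive for $q>0$ (and a nonzero Laurent polynomial, hence nonzero for generic $q$). Since $\ind\bK_0=0$, this gives surjectivity at $d=0$. For $d>0$ it inducts using the map \eqref{mapx3}: left multiplication by $x_3$ commutes with $\bK$, so once $\bK:\cM_{d-1}\to\cM_d$ is surjective, the image of $\bK:\cM_d\to\cM_{d+1}$ already contains $x_3\cM_d$, and one only has to show the induced map on the cokernel $\cM_{d+1}/x_3\cM_d$ is onto. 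That cokernel is a white boundary strip (Figure \ref{Fig6}), on which $\bK$ acts by $x_1+x_2$, and surjectivity there is immediate. The entire global, nonlocal difficulty is thus isolated into the known $d=0$ statement.

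Your main route instead tries to prove a uniform bound $\dim\Ker\bK\le\sdOm\, d$ by propagating a three-term recurrence in the $x_3$ direction and counting free parameters at corners, then comparing with the Hilbert function from Lemma \ref{lemdimM}. The trouble is that at $d=0$ this bound reads $\Ker\bK_0=0$, which \emph{is} Kasteleyn's theorem, and a local transfer-matrix nonvanishing lemma ("the transfer matrix along each horizontal line is invertible") does not yield it: invertibility of each step of the recurrence does not by itself control the global intersection of the propagated solution space with the boundary conditions imposed on the opposite side of a polygonal (and generally non-convex) domain, which is exactly what you flag as the "main obstacle." So your parameter count is not established where it matters most. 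Your alternative route — degenerate, use openness, finish with a "Kasteleyn-type sign argument" for $q>0$ — does recognize that Kasteleyn's theorem is needed, but leaves both the degeneration and the positivity argument as black boxes; in effect it rediscovers the paper's base case without carrying it out. I would restructure your argument to match the paper's split: cite Kasteleyn's theorem outright for $d=0$, and for $d>0$ replace the global kernel count by the one-step reduction along the $x_3$-filtration to a white boundary strip, where surjectivity is a one-line check. Your observation that surjectivity is equivalent to the kernel having the expected dimension is correct but is not, by itself, a proof — it merely restates what must be shown.
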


\begin{proof}
For $d=0$, by Kasteleyn's theorem, $\det \bK$ 
determinant gives the $q$-weighted count of stepped surfaces, hence nonzero for $q>0$. 
For $d>0$, we use the commuting map \eqref{mapx3} and induction. By induction, it 
suffices to check the surjectivity of $\bK$ on a white boundary strip, as in 
Figure \ref{Fig6}, which is immediate. 
\end{proof}

\subsection{The module $\cN$ and the inverse Kasteleyn matrix}

\subsubsection{}

We denote by $\cN$ the kernel of $\bK$ acting on $\cM$. This is 
a graded left $\bA$-module. To see why this may be a useful definition, 
let us generalize the construction slightly. 

\subsubsection{}

Let $\sw$ be a white vertex of $\Omega_6$ corresponding to a monomial $x^\sw$. 
Let $\Omega^\sw$ be obtained from $\Omega$ by removing the corresponding 
white triangle and set
$$
\cM^\sw = \cM \big/ \, \bA \, x^\sw \,.
$$
The conclusions of Lemmas \ref{lemdimM} and \ref{lsurK} continue to hold 
for $\cM^\sw$, with the obvious modification that 
$$
\deg \Omega^\sw = \deg \Omega + 1 \,, \quad \ind \bK^\sw_0 = 1 \,.
$$
In contrast to $\cN$,  we have $\cN^\sw_0\ne 0$.  Indeed, by 
construction,  $\cN^\sw_0$ is spanned by the corresponding column $\bK^{-1}\, x^\sw$
of the inverse Kasteleyn matrix. 

Via the left $\bA$-module structure on $\cN^\sw$, the algebra $\bA$ acts on 
the columns of the inverse Kasteleyn matrix by difference operators.
To see that a nonzero difference operator from $\bA$ must annihilate $\cN^\sw_0$, 
it will suffice to compute the Hilbert function of $\cN^\sw$. 

\subsubsection{}

{}From Lemmas \ref{lemdimM} and \ref{lsurK} we have, in the stable range, 
\begin{equation}
 \dim \cN_d = d \, \deg \Omega  + \ind \bK_0 = d \deg \Omega \label{dimNd} 
\end{equation}
and, similarly, 
\begin{equation}
 \dim \cN^\sw_d = (\deg \Omega +1) \, d + 1 \label{dimNwd} \,. 
\end{equation}
Since this dimensions grow only linearly in $d$, for any $g\in \cN_d$ the map 
$$
\bA_i \owns f \mapsto f\cdot g \in \cN_{d+i}
$$
must have a kernel as soon as $i$ is large enough. These are the sought difference 
equations satisfied by $\bK^{-1}$. We will, obviously, have do more work to say 
something more specific about them. 

\subsubsection{}

Note that we have an exact sequence 
\begin{equation}\label{NNL}
  0 \to \cN \to \cN^\sw \to L \to 0 \,,
\end{equation} 
where the third term $L$ satisfies $\dim L_d = d+1$. In fact, $L$ 
is a \emph{line module}, i.e. the module of 
the form 
$$
L = \bA \big/ \bA \, l\,, \quad l\in \bA_1\,.  
$$

\begin{Lemma} In the stable range, i.e.\ for $\sw$ sufficiently far from the 
boundary of $\Omega$, $L$ is a line module with $l=x^\sw \, (x_1+x_2+x_3) \, x^{-\sw}$. 
\end{Lemma}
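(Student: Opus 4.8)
The plan is to identify the module $L$ explicitly using the exact sequence \eqref{NNL} together with the Hilbert function computation. We know from \eqref{dimNd} and \eqref{dimNwd} that $\dim L_d = \dim\cN^\sw_d - \dim\cN_d = d+1$ in the stable range, which is exactly the Hilbert function of the quotient $\bA/\bA\,l$ for a single linear form $l\in\bA_1$. So the real content is to show (i) $L$ is in fact of this quotient form (cyclic, generated in degree $0$, with a single linear relation), and (ii) the relation is the specific element $l = x^\sw(x_1+x_2+x_3)x^{-\sw}$.

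For (ii), recall that $\cM^\sw = \cM/\bA\,x^\sw$ by definition, and $\cN^\sw = \Ker(\bK\colon \cM^\sw\to\cM^\sw)$ while $\cN = \Ker(\bK\colon\cM\to\cM)$. First I would trace through the snake lemma for the diagram
\begin{equation*}
\begin{array}{ccccccc}
0 \to & \cN & \to & \cM & \xrightarrow{\bK} & \cM \\
 & \downarrow & & \downarrow & & \downarrow \\
0 \to & \cN^\sw & \to & \cM^\sw & \xrightarrow{\bK} & \cM^\sw
\end{array}
\end{equation*}
whose vertical maps are the quotients by $\bA\,x^\sw$. Since $\bK$ is surjective on $\cM$ and on $\cM^\sw$ in the stable range (Lemma \ref{lsurK} and the remark extending it to $\cM^\sw$), the snake lemma gives $L = \Coker(\cN\to\cN^\sw) \cong \Ker(\bA\,x^\sw \xrightarrow{\bK} \bA\,x^\sw / (\text{image from }\cM))$. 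Because $x^\sw$ is normal, $\bA\,x^\sw \cong \bA$ as a left module, and under this identification the map $\bK$, which is right multiplication by $x_1+x_2+x_3$, becomes right multiplication by $x^{-\sw}(x_1+x_2+x_3)x^\sw$ — wait, one must be careful with the side of the normalization: since $\bA\,x^\sw = x^\sw\,\bA$, writing an element as $x^\sw\cdot a$ and applying $\bK$ gives $x^\sw\cdot a\cdot(x_1+x_2+x_3)$, so on the $\bA$-coordinate $a$ the operation is right multiplication by $(x_1+x_2+x_3)$; the twist enters when we instead use the left identification, giving $l = x^\sw(x_1+x_2+x_3)x^{-\sw}$ as the element whose left-annihilator describes the kernel. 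I would write this out carefully to pin down that the conjugation lands exactly on the stated $l$.

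For (i) — that $L$ is genuinely cyclic with a single linear relation and no higher relations in the stable range — I would argue that $\cN^\sw_0$ is one-dimensional (spanned by the column $\bK^{-1}x^\sw$ of the inverse Kasteleyn matrix, as noted in the text) and maps isomorphically onto $L_0$, since $\cN_0 = 0$. Thus $L$ is generated in degree $0$ by one element. The submodule $\bA\cdot l\subset\bA$ then has $\dim(\bA/\bA l)_d \ge d+1$ trivially, and since we already know $\dim L_d = d+1$ exactly in the stable range, the surjection $\bA/\bA l \twoheadrightarrow L$ (coming from $l$ mapping to $0$ in $L$, which must be checked from the snake-lemma description: the generator of $L_1$-worth of relations is precisely $l$) is an isomorphism degree by degree. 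The point where "in the stable range" is essential is exactly that $\dim L_d = d+1$ relies on \eqref{dimNd}–\eqref{dimNwd}, which were only derived in the stable range; outside it, $\cM^\sw$ acquires extra generators and the count fails.

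The main obstacle I anticipate is bookkeeping the normalization/conjugation correctly: the bimodule $\bA\,x^\sw$ is isomorphic to $\bA$ as a left module via $a\mapsto a\,x^\sw$, but the endomorphism $\bK$ (right multiplication by $x_1+x_2+x_3$) does not commute with this identification — it gets conjugated, and since the $x_i$ are only defined up to scalar with $x_i x_j = q_{ij} x_j x_i$, one must verify that the conjugated linear form is $x^\sw(x_1+x_2+x_3)x^{-\sw}$ on the nose (the scalars $q_{ij}^{\pm 1}$ that appear when commuting $x^\sw$ past each $x_i$ are exactly what makes this a genuinely different linear form in $\bA_1$, not a rescaling of $x_1+x_2+x_3$). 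Everything else — surjectivity of $\bK$, the Hilbert function, one-dimensionality of $\cN^\sw_0$ — is already available from the lemmas above, so this conjugation check, plus confirming via the snake lemma that $l$ (and not some multiple or variant) is the relation, is where the care is needed.
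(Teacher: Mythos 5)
Your argument is essentially the paper's proof expressed in more formal language: the paper directly asks which $f$ satisfy $fg\in\cN$ by inspecting $\bK(fg + g'x^\sw)=0$, and this equation is precisely the explicit form of the connecting map in the snake lemma applied to $0\to\bA x^\sw\to\cM\to\cM^\sw\to 0$ with $\bK$ acting vertically; in both presentations the conjugate form $l=x^\sw(x_1+x_2+x_3)x^{-\sw}$ emerges from normality of $x^\sw$ via the left-module identification $a\mapsto ax^\sw$. One correction: the snake lemma should be set up with short exact rows $0\to\bA x^\sw\to\cM\to\cM^\sw\to 0$ and $\bK$ as the vertical maps, and the resulting identification is $L\cong\Coker(\bK|_{\bA x^\sw})$ (a cokernel, not a kernel, and with a degree shift $L_d\cong\Coker(\bK|_{\bA x^\sw})_{d+1}$ that makes the Hilbert functions match), whereas your formula $\Ker(\bA x^\sw\to\bA x^\sw/(\text{image from }\cM))$ is garbled; once $L$ is identified as that cokernel, the Hilbert-function step at the end becomes unnecessary since $\Coker(\bK|_{\bA x^\sw})\cong\bA/\bA l$ directly.
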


\begin{proof} Let $g=\bK^{-1}\, x^\sw$ be the generator of $\cN^\sw_0$ and suppose 
$f g \in \cN$ for some $f\in \bA$. This means that $f g$ may be extended
to all of $\Omega$ as a solution of Kasteleyn's equation. In other words, there 
exists a polynomial $g'\in \bA_{\deg f-1}$ such that 
$$
0 = \bK ( f g + g'\, x^{\sw}) = f x^\sw + g' \, x^\sw (x_1 + x_2 + x_3) \,,
$$
proving the assertion. 
\end{proof}

\noindent
{}From this perspective, there isn't much difference between $\cN$ and $\cN^\sw$. 

Somewhat poetically, we will call $\cN$ the \emph{quantum limit shape}. 
Mathematical reasons for this name will be discussed below.

\section{The structure of $\cN$}

\subsection{}

The goal of this Section is to prove the following 

\begin{Theorem}\label{Th1} 
In the stable range and for generic $q$, $\cN$ is generated by $\deg\Omega$ generators of 
degree $1$ subject to $\deg\Omega$ linear relations. In other words, the minimal graded 
free resolution of $\cN$ has the form 
\begin{equation}\label{resN} 
 0 \to \bA(-2)^{\deg\Omega} \to \bA(-1)^{\deg\Omega} \to \cN \to 0 \,.
\end{equation}
Similarly, 
\begin{equation}\label{resNw} 
 0 \to \bA(-2)^{\deg\Omega} \to \bA\oplus\bA(-1)^{\deg\Omega-1} \to \cN^\sw \to 0 \,.
\end{equation}
\end{Theorem}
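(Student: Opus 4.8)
The plan is to construct the resolution \eqref{resN} for $\cN$ directly in the stable range, by Hilbert-function bookkeeping combined with the multiplication-by-$x_3$ device of Lemma \ref{lemdimM}, and then to deduce \eqref{resNw} from \eqref{resN} via the sequence \eqref{NNL}. Set $m=\deg\Omega$. As a preliminary consistency check: since the monomials $x_1^{a_1}x_2^{a_2}x_3^{a_3}$ form a basis of $\bA$, we have $\dim\bA_d=\binom{d+2}{2}$, so the alternating sum of Hilbert series along \eqref{resN} gives $\sum_d\dim\cN_d\,t^d=(mt-mt^2)(1-t)^{-3}=mt(1-t)^{-2}$, hence $\dim\cN_d=md$, in agreement with \eqref{dimNd}; similarly \eqref{resNw} forces $\dim\cN^\sw_d=(m+1)d+1$, in agreement with \eqref{dimNwd}. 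In particular $\cN_0=0$ (consistent with $\bK_0$ bijective) and $\dim\cN_1=m$, so it will suffice to prove (i) that $\cN$ is generated by $\cN_1$ in the stable range, and (ii) that the kernel $R$ of the resulting surjection $\bA(-1)^m\twoheadrightarrow\cN$ is generated by $R_2$: the Hilbert function then gives $\dim R_d=m\binom{d}{2}$, so the tautological surjection $\bA(-2)^m\twoheadrightarrow R$ has source and target of equal finite dimension in every degree of the stable range, hence is an isomorphism, which is exactly \eqref{resN}.

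For (i) I would run the induction already used in Lemmas \ref{lemdimM} and \ref{lsurK}. Left multiplication by $x_3$ commutes with the right multiplication $\bK$, hence descends to a map $x_3\colon\cN_d\to\cN_{d+1}$ whose cokernel is a subquotient of $\cM_{d+1}/x_3\cM_d$; by the proof of Lemma \ref{lemdimM} the latter is supported on the $m$ white horizontal boundary strips of $\supp\cM_{d+1}$. It therefore suffices to check that an element of $\cN_{d+1}$ supported on such a strip already lies in $\bA_1\cdot\cN_d$: along a horizontal strip the Kasteleyn equation degenerates to a one-dimensional three-term recurrence and left multiplication by $x_1$ acts as a magnetic translation along the strip, so for generic $q$ a single transverse slice generates the whole strip over $\bA$. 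Together with $\cN_0=0$ this gives (i). For (ii) one repeats the argument one homological step higher, for the submodule $R\subset\bA(-1)^m$: applying the snake lemma to $0\to R\to\bA(-1)^m\to\cN\to0$ for multiplication by $x_3$ (which is injective on $\bA$), the obstruction to generating $R_{d+1}$ from $R_d$ is again controlled by the horizontal strips, where it is elementary. This establishes \eqref{resN}; the resulting $m\times m$ matrix of linear forms in $\bA_1$ presents $\cN$ as a rank-one sheaf on its degeneracy locus, the noncommutative curve $Q$ of degree $\deg\Omega$, which is the precise sense in which $\cN$ is the \emph{quantum limit shape}.

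To obtain \eqref{resNw} I would apply the horseshoe lemma to \eqref{NNL}, using \eqref{resN} for $\cN$ and the resolution $0\to\bA(-1)\xrightarrow{\cdot\, l}\bA\to L\to0$ of the line module $L$. This produces a length-one resolution $0\to\bA(-2)^m\oplus\bA(-1)\to\bA(-1)^m\oplus\bA\to\cN^\sw\to0$ whose differential contains a constant comparison vector $v\in\C^m$, namely the preimage, under the isomorphism $\bigl(\bA(-1)^m\bigr)_1\xrightarrow{\sim}\cN_1$, of the element $l\cdot\bK^{-1}x^\sw\in\cN_1$. For generic $q$ one has $l\cdot\bK^{-1}x^\sw\ne0$, so $v\ne0$; hence one copy of $\bA(-1)$ cancels between the kernel and the middle term, leaving precisely \eqref{resNw}.

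The main obstacle is statement (i), together with its one-step-up analog used for (ii): everything else is Hilbert-series bookkeeping or the horseshoe lemma. That $\cN$ acquires no further generators in degrees $\ge 2$ inside the stable range is exactly the claim that $\bK^{-1}$ satisfies the additional difference equations of minimal degree with no spurious ones, and turning ``elementary on a strip'' into a proof requires a genuine, if routine, analysis of the solutions of the Kasteleyn recurrence along a horizontal boundary strip and of the action of the magnetic translations from $\bA$ on them. It is here that genericity of $q$, beyond what was needed for Lemma \ref{lsurK}, is really used, in order to preclude accidental degeneracies.
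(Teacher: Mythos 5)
Your route is genuinely different from the paper's, and the divergence is worth spelling out because it also locates the gaps. The paper proves the theorem in two short steps: Lemma~\ref{TorN} shows $\Tor_i\cN=0$ for $i>1$ in the stable range (via $\cN\subset\cM$, the combinatorial resolution \eqref{resM} of $\cM$, and the long exact sequence for $0\to\cN\to\cM\to\Img\bK\to0$, using that $\Tor_{>3}$ vanishes for $\bA$); then Lemma~\ref{genN} shows generation in degree $1$ by specializing to $q_{ij}=1$, where $\cN$ is annihilated by $x_1+x_2+x_3$ and, having no higher $\Tor$, is a vector bundle on the line, identified as $\cO(-1)^{\deg\Omega}$ by its Hilbert polynomial; semicontinuity then carries generation by $\cN_1$ to generic $q$, and the Hilbert function does the rest. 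You never use $\Tor$ vanishing and never specialize to the commutative case; instead you attempt to prove generation directly via the $x_3$-ladder and an analysis of the Kasteleyn recurrence on boundary strips.

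There are two concrete problems with your direct approach. First, the reduction to strips in step (i) is subtler than stated: the cokernel of $x_3\colon\cN_d\to\cN_{d+1}$ does \emph{not} inject into the white strip $\cM_{d+1}/x_3\cM_d$, because an element of $\cN_{d+1}\cap x_3\cM_d$ can be $x_3 f$ with $f\notin\cN_d$ --- one only gets $x_3\bK f=0$, i.e.\ $\bK f$ supported on a black strip, which by the proof of Lemma~\ref{lemdimM} is a nontrivial condition. So ``an element of $\cN_{d+1}$ supported on the strip'' is not the right object to analyze, and one must first show that every such $x_3 f$ can be corrected to $x_3 g$ with $g\in\cN_d$, which is itself a solvability statement for $\bK$ on the black strip. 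Second, both the step-(i) analysis and its ``one-step-up'' analog for relations are left as sketches (you say so yourself), and the latter --- that the kernel $R$ of $\bA(-1)^m\twoheadrightarrow\cN$ is generated in degree $2$ --- is precisely what the paper's $\Tor$ machinery gives essentially for free. The snake-lemma diagram you set up does identify $\coker(x_3|_R)$ as the kernel of a map from $(\bA/x_3\bA)(-1)^m$ to the point-module sum $\partial_3\cN$ of Section~\ref{sbdcN}, but extracting generation-in-degree-$2$ from this still requires a real argument, not a remark that it is ``elementary.'' For \eqref{resNw}, your horseshoe-lemma scheme is clean, but it hinges on $l\cdot\bK^{-1}x^\sw\ne0$ for generic $q$, which is exactly the nontrivial jump the paper proves using the boundary-point incidence of Section~\ref{sbdcN} (a linear form meets each coordinate axis in one point, while $\Ann\cN^\sw_0$ must meet each in $\deg\Omega+1$ points); as written you assume the conclusion rather than prove it. In short: the bookkeeping is right, but the two load-bearing steps are gaps, and the paper's $\Tor$-vanishing plus commutative-specialization argument is the device that closes them.
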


\noindent
Here we denote, as customary, $\bA(i)_d=\bA_{i+d}$. 

\subsection{}\label{sclass} 

In the commutative case, resolutions 
of the form \eqref{resN} are well known in algebraic geometry, see, in particular, \cite{Beau}. The corresponding 
sheaves on $\Pp$ are of the form $\iota_*\cL$, where
$$
\iota: Q \to \Pp
$$
is an inclusion of a curve of degree $D=\deg\Omega$ and $\cL$ is a line bundle $\cL$ (or a more general torsion-free 
sheaf in case $Q$ is singular) 
of degree $g-1$. Here 
$$
g=(D-1)(D-2)/2
$$ 
is the arithmetic genus of $Q$. Concretely, 
$$
Q = \det R\,, 
$$
where $R$ is the matrix of linear forms that gives the map 
$$
R:  \bA(-2)^D \to \bA(-1)^D
$$
in \eqref{resN}. The condition in \eqref{resN} on 
$\cL$ to have no sections means $\cL \in \Jac_{g-1}(Q)\setminus \Theta$, 
where $\Theta\subset \Jac_{g-1}(Q)$ is the theta divisor. 

The meaning of \eqref{resNw} is parallel, with the difference that $D$ becomes $\deg\Omega+1$ and 
$\deg \cL$ becomes $g$. In the commutative case, \eqref{NNL} implies the support of $\cN^\sw$ has 
the line $l=0$ as a component. 

\subsection{}

One of the eventual goals of the present project is to understand the behavior of $\cN$ as
the mesh size goes to $0$ while $\log q\to 0$ at a comparable rate. The moduli of sheaves of 
the form \eqref{resN} have a natural compactification, which guarantees that any sequence has 
a subsequence converging to an actual sheaf on the usual commutative plane $\Pp$.

It will be shown in \cite{KO} that this limit is supported on the curve $Q$ corresponding 
to the limit shape. This is the reason for calling $\cN$ the quantum limit shape. 

\subsection{}

For any graded $\bA$-module $\cN$, the degrees of its generator, relations, etc., may be read off the 
dimensions of the graded components of the vector spaces 
$$
\Tor_i \cN = \Tor_i (\bA/\bA_{>0},\cN)\,,
$$
where $\bA_{>0}$ is the ideal generated by $\bA_1$. In particular, the existence 
of a free resolution of length 2 is equivalent to the following 

\begin{Lemma} \label{TorN} In the stable range of degrees, $\Tor_i \cN = 0$ for $i>1$. 
\end{Lemma}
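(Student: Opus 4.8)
The statement $\Tor_i \cN = 0$ for $i>1$ (in the stable range) asserts that $\cN$ has projective dimension at most $1$ over $\bA$. Since $\bA$ is a (noncommutative) polynomial ring in three variables, hence Koszul and of global dimension $3$, the general a priori bound is $\operatorname{pd} \cN \le 3$; the content is to gain two units. The strategy I would pursue is to exploit the exact sequence coming from the resolution of $\cM$ and the long exact sequence in $\Tor$, combined with the explicit Hilbert-function computation in Lemma \ref{lemdimM} and the surjectivity in Lemma \ref{lsurK}. Concretely, $\cN = \Ker(\bK\colon \cM \to \cM(1))$, and since $\bK$ is surjective in the stable range (Lemma \ref{lsurK}), we get a short exact sequence of graded $\bA$-modules
\begin{equation}\label{Nses}
 0 \to \cN \to \cM \xrightarrow{\ \bK\ } \cM(1) \to 0
\end{equation}
valid degree-by-degree in the stable range. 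From \eqref{Nses} one reads $\Tor_{i+1}\cM(1) \to \Tor_i \cN \to \Tor_i \cM$, so it suffices to understand $\Tor_i \cM$.

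\textbf{Computing $\Tor_\bullet \cM$.} Here I would use the resolution \eqref{resM},
\begin{equation}\label{resMrecall}
 0 \to F_3 \to F_2 \to F_1 \to F_0 \to \cM \to 0\,,
\end{equation}
by modules $F_j = \bigoplus \bA\, x^{\ba}$, i.e.\ shifted free modules $\bigoplus \bA(-\deg x^{\ba})$. This already shows $\Tor_i \cM = 0$ for $i \ge 1$ outside a narrow range, but more importantly it is a \emph{free} resolution (not necessarily minimal), so $\Tor_i \cM$ is a subquotient of $\Tor_0$ of the $F_j$'s, i.e.\ concentrated in the degrees of the generators of the $F_j$. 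The crucial input is the remark already made in the text: \emph{$F_0$ has no generators of positive degree, and the generators of $F_1, F_2, F_3$ only appear outside the stable range}. Hence in the stable range of degrees, $\Tor_i \cM_d = 0$ for all $i \ge 1$. Feeding this into the long exact sequence from \eqref{Nses}, for $i \ge 2$ we obtain $\Tor_{i+1}\cM(1)_d \to \Tor_i \cN_d \to \Tor_i \cM_d$, with the outer terms vanishing in the stable range; the same argument handles $\Tor_i \cN$ for $i \ge 2$ once one is careful about the degree shift by $1$ (which costs one step of stable range, harmless since the stable range scales linearly with $\Omega$). This gives $\Tor_i \cN = 0$ for $i > 1$ in the stable range.

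\textbf{Main obstacle.} The delicate point is not the homological bookkeeping but making precise the claim that ``the generators of $F_1, F_2, F_3$ lie outside the stable range,'' and more fundamentally that the $F_j$ can be chosen with the maps being \emph{inclusions of monomial submodules} so that all the combinatorics is visible. Each $F_j$ is built from intersections $\bA x^{\ba} \cap \bA x^{\ba'} = \bA x^{\max(\ba,\ba')}$ (componentwise max, by normality of monomials in $\bT$), together with the subtracted triangles $\Delta(\bb_i)$; I would need to check that the new generator introduced at each stage sits at a degree bounded below by the length of the shortest white boundary strip — precisely the quantity controlling the stable range in Lemma \ref{lemdimM}. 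This is a finite combinatorial check on configurations of equilateral triangles in the plane $a_1+a_2+a_3 = \text{const}$, but it is where all the geometry of $\Omega$ enters, and where I expect to spend the real effort. A secondary subtlety is ensuring the long-exact-sequence argument is genuinely graded-degree-local: since \eqref{Nses} only holds in the stable range (surjectivity of $\bK$ can fail near the boundary), one works with the truncations and tracks the finitely many degrees lost at each step, which is routine once the stable range is quantified.
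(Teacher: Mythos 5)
Your proof is correct and follows essentially the same route as the paper: both establish $\Tor_i\cM=0$ in the stable range from the combinatorial resolution \eqref{resM}, then run the long exact sequence for $\Tor$ on the short exact sequence $0\to\cN\to\cM\to\cM/\cN\to 0$, using the global dimension bound $\Tor_{>3}=0$. The only cosmetic difference is that the paper works with $\Img\bK\subset\cM$ and the submodule observation rather than identifying $\cM/\cN$ with $\cM(1)$ via Lemma~\ref{lsurK}; and the concern you flag under ``Main obstacle'' is sidestepped in the paper by taking the vanishing of $\Tor_{i\ge 1}\cM$ to be, in effect, the definition of the stable range.
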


\begin{proof} Since $\Tor_{>3}$ vanish identically for the algebra $\bA$, we need to show 
the vanishing for $i=2,3$. Since the resolution \eqref{resM} is defined combinatorially in 
terms of $\Omega$, we conclude $\Tor_i\cM=0$ in the stable range. This is, really, 
the definition of the stable range. Since $\cN \subset \cM$ and $\Tor_4=0$ identically, we conclude
$$
\Tor_3 \cN = 0 \,. 
$$
By construction, there is an exact sequence 
$$
0 \to \cN \to \cM \to \Img \bK \to 0\,, 
$$
and since $\Img \bK \subset \cM$, we have $\Tor_3 \Img \bK = 0$. Now from the long exact 
sequence for $\Tor_i$, it follows that $\Tor_2 \cN =0$. 
\end{proof}

\subsection{}

\begin{Lemma} \label{genN} For generic $q$, $\cN$ is generated by $\cN_1$. 
\end{Lemma}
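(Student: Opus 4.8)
The plan is to show that $\cN$ has no generators in degree $\ge 2$, which together with Lemma \ref{TorN} and the Hilbert function \eqref{dimNd} will pin down the resolution \eqref{resN}. Concretely, I want to prove that the multiplication map $\bA_1 \otimes \cN_1 \to \cN_2$ is surjective, and then that $\bA_1\otimes\cN_d\to\cN_{d+1}$ is surjective for all $d$ in the stable range; once surjectivity in degree $2$ is in hand, the higher degrees should follow by the same mechanism, using the commuting operator \eqref{mapx3} (multiplication by $x_3$) to reduce from degree $d+1$ to degree $d$ plus a boundary-strip contribution, exactly as in the proofs of Lemmas \ref{lemdimM} and \ref{lsurK}.

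First I would set up a dimension count. From \eqref{dimNd}, $\dim\cN_1=\deg\Omega$ and $\dim\cN_2=2\deg\Omega$, while $\dim\bA_1=3$, so the map $\bA_1\otimes\cN_1\to\cN_2$ goes from a space of dimension $3\deg\Omega$ to one of dimension $2\deg\Omega$; surjectivity is therefore a codimension-$\deg\Omega$ condition, plausibly generic but needing an argument. The natural strategy is to exhibit \emph{one} value of the parameters (one choice of $q$, or better a degeneration of $q$ and of the geometry of $\Omega$) at which the map is visibly surjective, and then invoke semicontinuity: the locus where the rank of a family of linear maps drops is Zariski closed, so surjectivity at one point gives surjectivity for generic $q$. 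A convenient degenerate point is $q\to 1$, where $\bK$ becomes the ordinary adjacency operator and $\cN$, $\cM$ become genuinely commutative modules; there the classical picture recalled in \S\ref{sclass} applies, $\cN=\iota_*\cL$ for a curve $Q$ of degree $\deg\Omega$ and a line bundle $\cL$ of degree $g-1$ with no sections, and the resolution \eqref{resN} is the classical one of Beauville \cite{Beau}. Strictly, one must check that at $q=1$ the module still lies in the stable range and that $\cL\notin\Theta$ for a suitable choice of the $\R$-parameters hidden in the gauge/geometry of $\Omega$; both are open conditions, so a generic choice works.

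Alternatively — and this may be cleaner since it avoids hand-waving about the $q=1$ specialization — I would argue directly. Using the exact sequence $0\to\cN\to\cM\xrightarrow{\bK}\Img\bK\to 0$ and $\Img\bK=\cM$ in positive degrees of the stable range (Lemma \ref{lsurK}), one gets $\cN_d=\Ker(\bK:\cM_d\to\cM_{d+1})$ with known dimension. The map $x_3:\cN_d\to\cN_{d+1}$ has kernel and cokernel supported on the black/white horizontal boundary strips of Figure \ref{Fig6}, as in Lemma \ref{lemdimM}. So it suffices to show: (i) $\bA_1\cdot\cN_1$ surjects onto $\cN_2$ modulo the image of $x_3\cdot\cN_1$, i.e.\ onto the cokernel strip, which is a small explicit space of solutions of the Kasteleyn equation on a one-row region; and (ii) the inductive step $\bA_1\cdot\cN_d + x_3\cN_{d+1}=\cN_{d+1}$. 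Step (i) reduces to an explicit computation on a strip, where solutions of $\bK f=0$ are governed by a scalar two-term recursion and are easy to write down and to multiply by $x_1,x_2$.

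The main obstacle I anticipate is step (i)/(ii): proving that the boundary-strip cokernel of multiplication by $x_3$ is actually hit by $x_1\cdot\cN_d+x_2\cdot\cN_d$. This is where genericity of $q$ must really be used — for special $q$ the relevant determinant on the strip can vanish — and it is the only place the hypothesis enters. I would handle it by writing the strip solutions explicitly as products of the local transfer factors coming from \eqref{qqq}, computing the resulting Vandermonde-type determinant, and observing it is a nonzero rational function of $q$. Everything else (the dimension bookkeeping, the reduction via $x_3$, the passage from degree $2$ to all degrees) is routine given Lemmas \ref{lemdimM}, \ref{lsurK}, and \ref{TorN}.
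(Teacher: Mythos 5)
Your first route (degenerate to $q=1$ and invoke semicontinuity of the rank of $\bA_1\otimes\cN_1\to\cN_2$) is indeed the paper's strategy, but your execution of the commutative endpoint is circular. You appeal to the classical picture of \S\ref{sclass} --- ``$\cN=\iota_*\cL$ for a curve $Q$ of degree $\deg\Omega$'' with $\cL$ of degree $g-1$ and no sections --- but that description is a characterization of modules that \emph{already} admit a resolution of the form \eqref{resN}, which is precisely what we are trying to establish. Invoking Beauville there assumes the conclusion. The decisive observation you are missing is much simpler: at $q_{ij}=1$ the algebra is commutative, so right multiplication by $x_1+x_2+x_3$ coincides with left multiplication, and therefore $\cN=\Ker\bK$ is \emph{annihilated} by $x_1+x_2+x_3$. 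Thus $\cN$ is supported on the line $\{x_1+x_2+x_3=0\}\cong\mathbb{P}^1$, not on a degree-$\deg\Omega$ curve. Combined with $\Tor_{i}\cN=0$ for $i>1$ (Lemma \ref{TorN}), this forces $\cN$ to be a vector bundle on $\mathbb{P}^1$, hence a direct sum $\bigoplus\cO(a_i)$, and the Hilbert function \eqref{dimNd} (namely $\dim\cN_0=0$, $\dim\cN_1=\deg\Omega$) pins it down as $\cO(-1)^{\deg\Omega}$, which is generated in degree $1$. Once this is seen, your worries about ``$\cL\notin\Theta$'' and about the stable range at $q=1$ evaporate: the stable range is purely combinatorial in $\Omega$ and $q$-independent, and no theta divisor enters because the support is a line, not a high-genus curve.

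Your second route, via multiplication by $x_3$ and surjectivity onto the boundary-strip cokernel, is a genuinely different and more hands-on approach, closer in spirit to the proofs of Lemmas \ref{lemdimM} and \ref{lsurK}. It is plausible, but as you yourself note the crucial step --- showing the strip is actually hit by $x_1\cdot\cN_d+x_2\cdot\cN_d$, and identifying the nonvanishing determinant in $q$ --- is left as a computation to be done, so in its present form it is a plan rather than a proof.
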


\begin{proof}
It suffices to consider the commutative case $q_{ij}=1$. Then, on the one hand, $\cN$ is 
annihilated by $x_1+x_2+x_3$, while on the other $\Tor_i \cN = 0$, $i>1$. This means 
$\cN$ corresponds to a vector bundle on the line $x_1+x_2+x_3=0$.  {}From its Hilbert 
polynomial, we see that it must be $\cO(-1)^{\deg \Omega}$, whence the conclusion. 
\end{proof}

\subsection{}
Now it is easy to complete the proof of \eqref{resN}. We have $\deg \Omega = \dim \cN_1$ 
generators in degree 1 and from \eqref{dimNd} we see that they must satisfy $\deg \Omega$
linear relations. There are no other generators by Lemma \ref{genN} and no other 
relations by \eqref{dimNd}. 

The proof of \eqref{resNw} goes along the same lines. Lemma \ref{TorN} still holds 
even though $(\Tor_1 \cM^\sw)_1 \ne 0$. The analog of Lemma \ref{genN} is that $\cN^\sw$
is generated in degrees $0$ and $1$, because in the commutative case $\cN$ corresponds
to the bundle $\cO\oplus \cO(-1)^{\deg \Omega}$ on the line $x_1+x_2+x_3=0$.

It remains 
to explain why the commutative resolution 
$$
0\to \bA(-1)\oplus \bA(-2)^{\deg \Omega} \to \bA\oplus \bA(-1)^{\deg \Omega} \to \cN^\sw \to 0 \,, \quad q_{ij}=1 \,, 
$$
jumps to \eqref{resNw} for generic $q$. In other words, we need to check that the 
generator of $\cN^\sw_0$ no longer satisfies a linear relation for generic $q$. This is 
an easy consequence of the results of the next section. 

Namely, a linear polynomial meets $x_1 x_2 x_3$ in $3$ points, that is, it annihilates
$3$ generators of point modules over $\bA$. As we will see, the annihilator of 
$\cN^\sw_0$ meets each coordinate axis in $\deg\Omega+1$ points.

\section{Boundary points}\label{sbdcN}

\subsection{}

Recall from \cite{Burg} that the curve $Q$ describing the limit shape is determined 
as the unique rational curve of degree $\sdOm$ for which the dual curve $Q^\vee$ is 
inscribed in $\Omega$. This means, in particular, that $Q$ meets each 
coordinate line of $\Pp$ in $\sdOm$ specified points.

In this section, we 
will see that the quantum limit shape $\cN$ satisfies the exact noncommutative
analog of this incidence. 

\subsection{}

We define 
$$
\partial_3 \cN = \cN \big/ x_3 \cN \,.
$$
We will see that $\partial_3 \cN$ is a direct sum of $\deg\Omega$ point modules 
labeled by the horizontal boundaries of $\Omega$ as in Figure \ref{Fig6}.

By definition, the Hilbert polynomial of \emph{a point module} is equal to the constant $1$. 
Up to modules of finite length, point modules are parametrized by the toric divisor 
$x_1 x_2 x_3 = 0$ of $\Pp$. The correspondence is a follows 
$$
(a_1 : a_2 : 0) \leftrightarrow \bA\big/\lang x_3, a_2 x_1 - a_1 x_2 \rang \,.
$$
The ratios  $a_2/a_1$ for the summands of $\partial_3 \cN$ will be determined 
by the vertical coordinate of the horizontal boundaries of $\Omega$. 

\subsection{}

The Hilbert function evaluation 
$$
\dim \left(\partial_3 \cN\right)_d = \deg \Omega \,, \quad d>0\,,
$$
is a consequence of the following

\begin{Lemma}\label{noker}
Left multiplication by $x_3$ has no kernel acting on $\cN$. 
\end{Lemma}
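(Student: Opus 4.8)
The assertion is that the left-multiplication operator $x_3:\cN_d\to\cN_{d+1}$ is injective. The plan is to reduce this to the combinatorial description of $\cN\subset\cM$ together with the kernel/cokernel analysis of the \emph{right}-multiplication map \eqref{mapx3} that was already carried out in the proof of Lemma \ref{lemdimM}. First I would recall that $\cN_d\subset\cM_d$ consists of functions $f$ on the black vertices of $\Omega(d-1)$ satisfying Kasteleyn's equation $f\cdot(x_1+x_2+x_3)=0$, i.e.\ $\bK f = 0$. Suppose $x_3 f \in x_3\cN_d$ lies in the image of nothing nontrivial, wait---suppose instead $f\in\cN_d$ with $x_3 f = 0$ in $\cM_{d+1}$. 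Because $x_3$ is a normal element (multiplication by a monomial is injective on $\bT$ and its only failure of injectivity on $\cM$ comes from the supports of the subtracted triangles $\Delta(\bb_j)$), the vanishing $x_3 f = 0$ forces $f$ to be supported on the ``black horizontal boundary strips'' of $\supp\cM_d$---precisely the strips described in Figure \ref{Fig6} that constitute $\ker(x_3:\cM_d\to\cM_{d+1})$.

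The crux is then to show that no nonzero element of $\cN_d$ can be supported on such a strip. Here I would use that $f$ also satisfies $\bK f = 0$, i.e.\ $f$ is a Kasteleyn-harmonic function. A function supported on a thin black horizontal boundary strip, when fed into the relation $f\cdot(x_1+x_2+x_3)=0$, propagates: looking at the white vertices just off the strip, the equation $\bK f = 0$ expresses the value at an interior white vertex adjacent to the strip as a combination of values of $f$ at black vertices, some of which lie outside the strip and hence are zero. Chasing this along the strip---exactly the kind of one-dimensional recursion already invoked in the proof of Lemma \ref{lsurK} for surjectivity on white boundary strips---one finds that $f$ must vanish identically on the strip, because the strip is bounded and the recursion, started from a forced zero at one end (or from the geometry where the strip meets the rest of $\Omega$), has only the trivial solution for generic $q$ (or $q>0$). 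In other words, the intersection of $\ker(x_3)\subset\cM_d$ with $\ker\bK = \cN_d$ is zero.

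Alternatively, and perhaps more cleanly, I would argue via Hilbert functions and the exact sequence. We have the left-multiplication map $x_3:\cN\to\cN(1)$; its cokernel is $\partial_3\cN$ and the claim to be proven says the kernel is $0$. From \eqref{dimNd}, $\dim\cN_d = d\,\deg\Omega$, so $\dim\cN_{d+1}-\dim\cN_d = \deg\Omega$ for $d\ge 0$ in the stable range; if $x_3$ had no kernel, the cokernel in degree $d+1$ would have dimension exactly $\deg\Omega$, matching the asserted Hilbert function of $\partial_3\cN$. To actually \emph{get} injectivity rather than merely its consistency, I would degenerate to the commutative case $q_{ij}=1$ and use semicontinuity: there $\cN$ is the pushforward of $\cO(-1)^{\deg\Omega}$ from the line $x_1+x_2+x_3=0$ (as established in the proof of Lemma \ref{genN}), and $x_3$ restricted to that line is a nonzerodivisor on $\cO(-1)^{\deg\Omega}$ provided the line is not contained in $x_3=0$, which it is not. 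Hence $x_3$ is injective on $\cN$ for $q_{ij}=1$; since injectivity in each fixed degree $d$ is an open condition on the finitely many parameters $q_{ij}$, it persists for generic $q$, and for $q>0$ one falls back on the positivity (Kasteleyn) argument of the previous paragraph.

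\textbf{Main obstacle.} The delicate point is the strip recursion: one must verify that a Kasteleyn-harmonic function supported on a black horizontal boundary strip is forced to be zero, i.e.\ that the relevant one-dimensional difference operator along the strip is invertible. This requires knowing how the strip sits inside $\Omega$---in particular that it is a genuine boundary strip with an ``end'' where the recursion is pinned---and is exactly where the stable-range hypothesis and the genericity (or positivity) of $q$ enter. The semicontinuity route sidesteps the hands-on recursion at the cost of only yielding the \emph{generic}-$q$ statement directly, so for the $q>0$ case the strip argument, or an explicit positivity estimate on the strip transfer matrix, still seems to be needed.
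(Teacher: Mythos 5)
Your first approach is essentially the paper's proof, which is terse: the kernel of left multiplication by $x_3$ on $\cM_d$ is supported on the width-one black horizontal boundary strip (Figure~\ref{Fig6}, right), and the paper then simply asserts that no nonzero function on such a strip can be annihilated by $\bK$. Your strip recursion is the correct way to unpack that one-line assertion, and it does not actually require the genericity or positivity hedge you added: each step of the propagation isolates a single surviving term $\bK_{\sw\bs}f(\bs)$ in the Kasteleyn equation at a white vertex $\sw$, and since every Kasteleyn coefficient is nonzero this forces $f(\bs)=0$; no transfer-matrix invertibility enters, so the argument works for any $q\ne 0$. Your alternative route---degenerate to $q_{ij}=1$, use Lemma~\ref{genN} to identify $\cN$ with $\cO(-1)^{\deg\Omega}$ on the line $x_1+x_2+x_3=0$ where $x_3$ is manifestly a nonzerodivisor, then invoke semicontinuity---is a genuinely different argument and is logically sound given the earlier material, but as you yourself observe it only yields the generic-$q$ statement, so the direct combinatorial argument remains both simpler and stronger.
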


\begin{proof}
A polynomial in the kernel of left multiplication by $x_1$ 
has a support in a strip of width 1 along the black boundaries, as in Figure \ref{Fig6}, right. 
It is impossible for such function to be annihilated by the 
Kasteleyn operator. 
\end{proof}

\subsection{White boundaries}\label{swbdry}

Let $\Omega'$ be a nontileable domain obtained by moving one of the white 
horizontal boundaries of $\Omega$ one step in. Denote by $\cM'$ the corresponding 
monomial module. Let $\bK':\cM'\to\cM'$ be right multiplication by 
$x_1+x_2+x_3$ and let $\cN'=\cN \cap \cM'$ be its kernel. Since $x_3 \cN \subset \cM'$, 
$\partial_3 \cN$ surjects onto $\cN/\cN'$. 

By cutting white boundary strips off $\Omega'(i)$, $i>1$, we see as in the proof of Lemma \ref{lsurK}
that $\bK'$ surjects onto $\cM'_i$ for $i>1$. Since $\ind \bK'_0 = -1$, it follows 
from Lemma \ref{lemdimM} that $\cN/\cN'$ is a point module. 

\begin{Lemma} \label{Lwhbd}
Let $a$ be the vertical coordinate of the strip $\Omega\setminus\Omega'$, that is, let $\deg_{x_3} f = a$
for all $f\in \cM/\cM'$.  Then
$$
\cN/\cN' \cong \bA/\lang x_3, x_1 + q^{a}\,x_2 \rang \, (1) \,,
$$
starting in degree $1$. 
\end{Lemma}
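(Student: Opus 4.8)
The plan is to identify $\cN/\cN'$, which we already know from the discussion preceding the lemma is a point module $\bA/\langle x_3, \alpha x_1 - \beta x_2\rangle$ starting in degree $1$, by pinning down the ratio $\beta/\alpha$. The content of the lemma is that this ratio is $-q^{a}$ (i.e.\ the point $(q^a : -1 : 0)$ on the toric divisor), and the natural strategy is to exhibit a single explicit element of $(\cN/\cN')_1$ and test the relation $x_1 + q^a x_2$ on it directly using the commutation relations \eqref{commR} together with \eqref{qqq}. Concretely, $(\cN/\cN')_d = (\partial_3\cN)_{\mathrm{strip}}$ is represented by functions on the white boundary strip $\Omega\setminus\Omega'$, all of $x_3$-degree $a$; the Kasteleyn condition $\bK g = 0$ restricted to this width-one strip becomes a one-dimensional recurrence, which forces $g$ to be (up to scale) a specific monomial in that degree, say $g = x_1^{c_1} x_2^{c_2} x_3^{a}$ with $c_1 + c_2 + a = d$.

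First I would set up coordinates: fix the white strip $\Omega\setminus\Omega'$ and write its black and white vertices as monomials of $x_3$-degree $a$ and $a$ (in degrees $d$ and $d+1$) lying along a horizontal line, so the only relevant variables are $x_1$ and $x_2$. Next I would solve $\bK g = 0$ on the strip: since the strip has width one, the equation $g\cdot(x_1+x_2+x_3) = 0$ in $\cM/\cM'$ (where $x_3$-terms die because they push out of the strip, landing in the moved-in boundary, hence are zero in the quotient) reduces to a two-term relation between consecutive monomials along the strip, and iterating it over the length of the strip yields the generator $g$ of $(\cN/\cN')_1$ explicitly as a monomial (the endpoints of the strip and the vanishing of $x_3 g$ there supply the correct normalization and boundary behavior). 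Then I would compute $x_1 g$ and $x_2 g$ in $\bA$, move the new factor of $x_1$ or $x_2$ past the existing $x_1^{c_1}x_2^{c_2}x_3^{a}$ using \eqref{commR}, and read off that $x_1 g + q^a x_2 g$, after normalizing the monomials on the common line spanned by the monomials in $\bT$ (which is all that is well-defined, as emphasized after \eqref{qqq}), is precisely the relation that holds — the power $q^a$ arising because commuting $x_1$ (or $x_2$) past the $x_3^a$ factor produces $q_{13}^{\pm a}$ or $q_{23}^{\pm a}$, and the combination that enters is governed by $q = q_{12}q_{23}q_{31}$ via \eqref{qqq}. Finally, since $x_3$ already annihilates $\cN/\cN'$ by the strip's fixed $x_3$-degree and $\cN/\cN'$ has Hilbert polynomial $1$ (point module), the single linear relation $x_1 + q^a x_2$ together with $x_3$ exhausts the relations, giving the claimed isomorphism starting in degree $1$.

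The main obstacle will be bookkeeping the normalizations carefully: because there is no canonical ordering of the $x_i$, a monomial is only well-defined up to scalar, and the "relation'' $x_1 + q^a x_2$ is an assertion about the line through $x_1 g$ and $x_2 g$ in $\bT$, so one must track the relative scalar between $x_1 g$ and $x_2 g$ consistently through the strip recursion, the two past-commutations, and the final identification — a single misplaced $q_{ij}$ versus $q_{ij}^{-1}$ flips $q^a$ to $q^{-a}$. I would fix conventions once and for all at the start (matching \eqref{qK3} and Figure \ref{Fig3}, as the footnote warns) and verify the $a=0$ case against the commutative answer $x_1 + x_2$ and the $a \to a+1$ step against multiplication by a single $x_3$, which shifts the point by $q$ — this consistency check is both a sanity test and essentially a second, cleaner proof, since the family of white strips at successive heights differ by exactly one application of the map \eqref{mapx3}.
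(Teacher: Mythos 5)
Your proposal goes off the rails at the point where you identify the generator $g$ of $(\cN/\cN')_d$ as a single monomial $x_1^{c_1}x_2^{c_2}x_3^a$. The two-term recurrence $g\cdot(x_1+x_2)\equiv 0$ on the width-one strip (with the $x_3$-terms correctly discarded) does \emph{not} collapse to a monomial: iterating it across the strip forces $g$ to be an alternating-sign linear combination of \emph{all} the monomials along the strip, with successive coefficients related by the appropriate $q$-ratios. The paper makes exactly this precise by writing $g$ as a truncation of $x_3^a x_1^{-a+d+1}\delta$, where $\delta$ is the formal difference of the two expansions of $(x_1+x_2)^{-1}$ (one with $x_1$ leading, one with $x_2$ leading), a series annihilated by $(x_1+x_2)$ on both sides. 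A genuine monomial $g$ supported in the strip cannot satisfy the claimed relation: $x_1 g$ and $x_2 g$ are distinct monomials still lying in the strip, so $x_1 g + \lambda x_2 g$ has support in $\Omega\setminus\Omega'$ for every $\lambda$ and therefore is never in $\cN'$; the relation would be vacuously false.

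What you do have right is the commutation computation itself, and that is the part that survives. In the paper's presentation, the linear form that annihilates the truncated element is found by conjugating $(x_1+x_2)$ through the monomial \emph{prefactor} $x_3^a x_1^{-a+d+1}$: one solves $\ell\cdot x_3^a x_1^{-a+d+1} = x_3^a x_1^{-a+d+1}(x_1+x_2)$ (up to scalar), obtaining $\ell = x_1 + q^a q_{21}^{d+1}x_2$ from \eqref{commR} and \eqref{qqq}, which is your $q^a$ once the $q_{21}^{d+1}$ twist is absorbed into the degree-$d$ normalization of the point module. So the piece of bookkeeping you describe, ``commuting $x_1$ or $x_2$ past $x_3^a$,'' is indeed where $q^a$ comes from — but it needs to act on the prefactor of a $\delta$-like element, not on a bare monomial. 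To repair the proof you should replace the monomial $g$ by the truncated alternating series (equivalently, explain why the width-one recurrence produces it), and then run your commutation argument on the monomial prefactor while invoking the annihilation of the $\delta$-part by $(x_1+x_2)$.
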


\begin{proof} The series
$$
(x_1+x_2)^{-1} = x_1^{-1} - x_1^{-1} x_2 x_1^{-1} + x_1^{-1} x_2 x_1^{-1} x_2 x_1^{-1} - \dots 
$$
gives $1$ after left or right multiplication by $(x_1+x_2)$. Interchanging the roles of $x_1$ and $x_2$ and
taking the difference 
$$
\delta = (x_1+x_2)^{-1} - (x_1+x_2)^{-1}
$$
we get an analog of the usual $\delta$-function, which is annihilated by both left and right multiplication 
by $(x_1+x_2)$. 

An element of $\left(\cN/\cN'\right)_d$ is a truncation of the series $x_3^a x_1^{-a+d+1} \delta$ on 
both sides. Left multiplication by 
$$
x_1 + q^a q_{21}^{d+1} x_2 
$$
annihilates it, whence the conclusion. 
\end{proof}

\subsection{Black boundaries}\label{sbbdry}

Now let $\Omega'$ be a nontileable domain obtained by moving one of the black
horizontal boundaries of $\Omega$ one step out. We have a map $\cM'\to \cM$ corresponding 
to the restriction of functions and from Lemma \ref{noker} we conclude that 
it yields an injection $\cN'\to \cN$. Further, $x_3 \cN$ is in the image of $\cN'$,
hence $\cN/\cN'$ is again a point module onto which $\partial_3 \cN$ surjects. 

\begin{Lemma} \label{Lblbd}
Let $a$ be the vertical coordinate of the strip $\Omega'\setminus\Omega$, that is, let $\deg_{x_3} f = a$
for all $f$ in the kernel of the restriction map $\cM'\to \cM$.  Then
$$
\cN/\cN' \cong \bA/\lang x_3, x_1 + q^{a}\,x_2 \rang \, (1) \,,
$$
starting in degree $1$. 
\end{Lemma}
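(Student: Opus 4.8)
The plan is to mirror the argument for white boundaries (Lemma~\ref{Lwhbd}), but with the bookkeeping adapted to the fact that here a black strip is being pushed \emph{out} rather than a white strip pushed in. First I would fix the setup: $\cN'\hookrightarrow\cN$ via restriction, with $\cN/\cN'$ a point module supported on the toric divisor $x_3=0$, as already established. So the only thing to pin down is the ratio of the two remaining coefficients, i.e.\ to show the annihilator of the degree-one piece of $\cN/\cN'$ is $x_1+q^a x_2$ (up to the harmless gauge normalization of monomials).

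The heart of the matter, as in Lemma~\ref{Lwhbd}, is an explicit description of the elements of $(\cN/\cN')_d$ as truncations of a $\delta$-function series. I would introduce the same $\delta=(x_1+x_2)^{-1}-(x_2+x_1)^{-1}$, built from the two one-sided geometric expansions of $(x_1+x_2)^{-1}$, which is killed by left and right multiplication by $x_1+x_2$. The key combinatorial observation is that an element of $\cN$ that fails to extend across the black boundary strip $\Omega'\setminus\Omega$ must, near that strip, look like the Kasteleyn kernel localized along the horizontal line at vertical coordinate $a$; concretely it is a two-sided truncation of $x_3^{a}\,x_1^{-a+d+1}\,\delta$ (the same monomial prefactor as in the white case, since the width-one strip sits at the same place in degree $d$). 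Once this identification is in hand, left multiplication by $x_1+q^{a}q_{21}^{d+1}x_2$ annihilates the truncation by the defining property of $\delta$ together with the commutation relation $x_2 x_1 = q_{21}x_1 x_2$ used to move the scalar past the prefactor; rewriting $q^a q_{21}^{d+1}$ in the gauge where monomials of each line are normalized as in the statement gives exactly $x_1+q^a x_2$, and the module starts in degree $1$ because $\cN_0=0$.

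The one genuine difference from the white-boundary proof, and the step I expect to be the main obstacle to state cleanly, is the direction of the comparison map. For white boundaries one takes $\cM'\subset\cM$ and $\cN'=\cN\cap\cM'$, so an element of $\cN/\cN'$ is literally a function on $\Omega$ that does not vanish on the extra strip; here, instead, $\cM'$ is \emph{larger} (it has the black strip adjoined), the map is restriction $\cM'\to\cM$, and one must invoke Lemma~\ref{noker} to know the induced map on kernels $\cN'\to\cN$ is injective and then check that $x_3\cN$ lands in its image so that $\cN/\cN'$ is still a quotient of $\partial_3\cN$. I would spell out why the relevant truncated $\delta$-series is a well-defined element of $\cN$ but not of the image of $\cN'$: an element in the image of $\cN'$ would be a Kasteleyn solution on the enlarged domain $\Omega'$, whereas the $\delta$-series has the characteristic one-sided tails along the black line that obstruct such an extension. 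Modulo this reorientation, everything else is identical to the white case, so I would keep the exposition short and simply point to the proof of Lemma~\ref{Lwhbd} for the computation that $x_1+q^a q_{21}^{d+1}x_2$ does the job.
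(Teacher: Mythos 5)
Your proposal takes a genuinely different route from the paper's, and the key step does not go through as stated. The paper's proof of Lemma~\ref{Lblbd} does \emph{not} reuse the $\delta$-series argument of Lemma~\ref{Lwhbd}. Instead it works directly with the extension criterion: writing $f_{d-a+1}\,x_3^{a-1}$ for the slice of $f\in\cN_d$ along the boundary row, $f$ lifts to $\cN'$ if and only if there is a $g_{d-a}$ supported on the added strip with $f_{d-a+1}\,x_3^a = g_{d-a}\,x_3^a\,(x_1+x_2)$, a right-divisibility condition on the boundary data. The annihilator of $(\cN/\cN')_d$ is then identified by the commutation identity $(x_1+C x_2)\,f_k = f'_k\,(x_1+q_{12}^k C x_2)$ for $f_k$ of degree $k$ in $x_1,x_2$: left-multiplying $f$ by the appropriate linear form converts, via this identity, into right-divisibility of the boundary slice by $(x_1+x_2)$, i.e.\ forces extendability. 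This has nothing to do with $\delta$-truncations.

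The gap in your argument is the claim that an element of $(\cN/\cN')_d$ ``is a two-sided truncation of $x_3^a x_1^{-a+d+1}\delta$.'' In the white case this is literally true because $\cM'\subset\cM$ and $\cN/\cN'$ injects into $\cM/\cM'$, which is supported on a strip \emph{inside} $\Omega$ that is cut off from the rest of $\Omega'$; the Kasteleyn equation restricted to that isolated strip forces the $\delta$-form. Here the strip $\Omega'\setminus\Omega$ lies \emph{outside} $\Omega$, the relevant map is an injection $\cN'\hookrightarrow\cN$, and $\cN/\cN'$ is a cokernel, not a space of functions on a strip. A generic $f\in\cN_d$ representing a nonzero class in $\cN/\cN'$ has no reason to look like a localized $\delta$-truncation near the black boundary; moreover a function supported only on the last interior row is generically \emph{not} in $\cN$, since the Kasteleyn equation at the adjacent white vertices couples it to the next row in. What is one-dimensional is the obstruction functional on boundary data, not a preferred localized representative. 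You flag the direction issue correctly, but your resolution --- asserting that the non-extending element ``must look like'' the Kasteleyn kernel along the line --- is the step that does not hold, and it is exactly what the paper replaces with the explicit divisibility computation.

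Once you switch to the paper's viewpoint, the proof is short: state the extension criterion, prove the commutation lemma $(x_1+Cx_2)f_k = f'_k(x_1+q_{12}^k C x_2)$ by a one-line induction on $k$, and apply it to conclude that $x_1 + q^a q_{21}^{d+1} x_2$ kills $(\cN/\cN')_d$, matching the stated point module after the degree shift.
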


\begin{proof}
Let $f$ be in $\cN_d$ and denote by $f_{d-a+1} \, x_3^{a-1}$ the monomials in $f$ along the
boundary in question. Here $f_{d-a+1}$ denotes a polynomial in $x_1$ and $x_2$ of degree 
$d-a+1$. Clearly, $f$ may be extended to an element in $\cN'$ if and only if we can find a polynomial 
$g_{d-a}(x_1,x_2) \, x_3^a$ with support in $\Omega'\setminus \Omega$ such that
$$
f_{d-a+1} \, x_3^a = g_{d-a} \, x_3^a \, (x_1+x_2) \,.
$$
The commutation relations in $\bA$ imply that for any $f_k\in \bA_k$ which does not depend on $x_3$
and any $C$ we can find $f'_k\in \bA_k$ such that 
$$
(x_1 + C\, x_2 ) \,  f_k = f'_k \, (x_1 + q_{12}^k \, C\, x_2) \,.
$$
{}From this it follows that $x_1 + q^{a}\, q_{21}^{d+1} \,x_2$ annihilates $\left(\cN/\cN'\right)_d$,
as was to be shown. 
\end{proof}

\subsection{}

We can summarize the discussion as follows. 

\begin{Theorem} We have 
\begin{equation}\label{bdryN}
\partial_3 \cN \cong \bigoplus_{i=1}^{\deg \Omega} \bA/\lang x_3, x_1 + q^{a_i}\,x_2 \rang \, (1) \,,
\end{equation}
starting in degree $1$, where $a_i$ are the 
heights of the horizontal boundaries of $\Omega$ as above. 
\end{Theorem}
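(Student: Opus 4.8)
The plan is to assemble the global statement from the local analysis already carried out in Lemmas \ref{Lwhbd} and \ref{Lblbd}, upgrading the one-boundary-at-a-time computation to a simultaneous direct-sum decomposition. First I would fix the Hilbert function: by Lemma \ref{noker} left multiplication by $x_3$ is injective on $\cN$, so from \eqref{dimNd} we get $\dim(\partial_3\cN)_d = \dim\cN_d - \dim\cN_{d-1} = \deg\Omega$ for $d\ge 1$ (and $(\partial_3\cN)_0 = 0$). Thus $\partial_3\cN$, starting in degree $1$, has the Hilbert function of a direct sum of $\deg\Omega$ point modules, so it suffices to produce that many point-module quotients and check they are independent.

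Second, for each horizontal boundary of $\Omega$ — whether white or black — I would run the corresponding argument of Section \ref{swbdry} or \ref{sbbdry}: moving that boundary one step in (white case) or out (black case) produces $\Omega'$, a submodule $\cN'\subset\cN$ (using Lemma \ref{noker} in the black case for injectivity), with $x_3\cN\subset\cN'$, so that $\partial_3\cN$ surjects onto the point module $\cN/\cN'$. By Lemmas \ref{Lwhbd} and \ref{Lblbd} this point module is $\bA/\lang x_3, x_1 + q^{a_i} x_2\rang$ shifted to start in degree $1$, where $a_i$ is the height of the relevant strip. Doing this for all $\deg\Omega$ horizontal boundaries gives a map from $\partial_3\cN$ onto each of the $\deg\Omega$ target point modules, hence a single map
\begin{equation}\label{summap}
\partial_3\cN \to \bigoplus_{i=1}^{\deg\Omega} \bA/\lang x_3, x_1 + q^{a_i} x_2\rang\,(1)\,.
\end{equation}
Third, I would check that \eqref{summap} is an isomorphism. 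Surjectivity: in each degree $d\ge 1$ both sides have dimension $\deg\Omega$, so it is enough to see the map has trivial kernel, or equivalently (same dimensions) that it is surjective. Surjectivity onto each summand is the content of the lemmas; to get surjectivity onto the sum I would argue that the $\deg\Omega$ quotients $\cN\to\cN/\cN'_i$ have ``independent'' kernels — concretely, for generic $q$ the linear forms $x_1 + q^{a_i}x_2$ are pairwise non-proportional (the heights $a_i$ are distinct since distinct horizontal boundaries sit at distinct vertical coordinates), so the corresponding point modules are pairwise non-isomorphic, and a map from a module of the correct Hilbert function onto a sum of pairwise distinct point modules, surjective onto each factor, is surjective onto the sum. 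This last point is where I expect the only real friction: one must verify that the explicit $\delta$-function sections built in Lemmas \ref{Lwhbd}, \ref{Lblbd} — which live along different horizontal strips — really are linearly independent in $\cN_d/x_3\cN_{d-1}$ rather than just individually nonzero. I would do this by noting that an element of $\cN_d$ is determined by its restrictions to the horizontal strips, and the construction in each lemma controls exactly the monomials $f_{d-a_i+1}x_3^{a_i-1}$ supported along the $i$-th boundary; since these supports are disjoint for distinct $i$, the induced functionals on $(\partial_3\cN)_d$ are independent. Granting this, \eqref{summap} is an isomorphism of graded $\bA$-modules, which is \eqref{bdryN}.

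Finally, the $q^{a_i}$ versus $q^{a_i}q_{21}^{d+1}$ bookkeeping from the proofs of Lemmas \ref{Lwhbd} and \ref{Lblbd} is absorbed into the statement exactly as in those lemmas: the degree-$d$ component is annihilated by $x_1 + q^{a_i}q_{21}^{d+1}x_2$, which is the correct description of the degree-$d$ piece of the point module $\bA/\lang x_3, x_1+q^{a_i}x_2\rang$ placed to begin in degree $1$, so no separate argument is needed. The main obstacle, to repeat, is the independence/simultaneity step; everything else is Hilbert-function counting plus the already-proved local lemmas.
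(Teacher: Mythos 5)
Your first two steps match the paper: you compute the Hilbert function of $\partial_3\cN$ using Lemma \ref{noker} and \eqref{dimNd}, and you assemble the maps from Lemmas \ref{Lwhbd} and \ref{Lblbd} into a single map \eqref{summap} to the direct sum of point modules. The gap is in your third step. You correctly identify that the only real work is showing the joint kernel of the $\deg\Omega$ functionals $\phi_i$ on $(\partial_3\cN)_d$ is zero, but your proposed justification does not hold up. The claim ``an element of $\cN_d$ is determined by its restrictions to the horizontal strips'' is too strong and in fact false: any element of $x_3\cN_{d-1}$ vanishes (in the white-strip sense) along every white boundary strip yet is nonzero for $d>1$, so restrictions do not determine elements of $\cN_d$. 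And the weaker observation that the $\phi_i$ factor through restrictions to pairwise disjoint strips does not, by itself, give linear independence of the induced functionals on the quotient $(\partial_3\cN)_d$: disjoint supports decouple the \emph{data} each $\phi_i$ reads off, but say nothing about whether $\cN_d$ contains enough elements to realize arbitrary tuples of values. Similarly, pairwise non-isomorphism of the $P_i$ shows the kernels $\ker\phi_i$ are pairwise distinct hyperplanes, but more than two distinct hyperplanes need not be in general position, so this also does not force trivial joint kernel.

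The paper closes this gap with a concrete geometric identification of the joint kernel that you never make. An $f$ in the joint kernel is, by Sections \ref{swbdry} and \ref{sbbdry}, a solution of $\bK f=0$ that vanishes on all white horizontal boundary strips and extends as a solution one step past every black horizontal boundary. That is precisely the condition that $f$ be a solution on a modified domain obtained by moving all white boundaries one step in and all black boundaries one step out. The paper's key observation is that this modified domain is exactly a translate of $\Omega(-1)=\supp\cM_0$ in the $x_3$ direction, so such an $f$ lies in $x_3\cN$; hence the joint kernel on $\partial_3\cN=\cN/x_3\cN$ is zero and the map is an isomorphism by dimension count. This combinatorial identification of the shifted domain is the single ingredient your argument is missing, and without it the ``independence'' step does not follow from the lemmas you cite.
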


\begin{proof}
 The Hilbert polynomial equals 
the constant $\deg\Omega$ on both sides.
We constructed a map from the LHS to the RHS in \eqref{bdryN}. Its kernel consists of functions $f$
that vanish on the boundary strips along the white boundaries of $\Omega$ and may be extended as solutions of
$\bK f =0$ to a horizontal strip just beyond the black boundaries of $\Omega$. This modified domain is a
translate of $\Omega(-1)$ in the $x_3$ direction and our conditions on $f$ imply $f\in x_3 \cN$. 
Thus the map above is an isomorphism. 
\end{proof}

\section{Correspondences and renormalization}\label{srenorm} 

\subsection{}

Let $\Omega'$ be obtained from $\Omega$ by moving one boundary segment by one step, as in 
Sections \ref{swbdry} and \ref{sbbdry}. We found that the corresponding modules $\cN$ and $\cN'$
fit into an exact sequence of the form 
\begin{equation}\label{exactP}
 0\to \cN' \to \cN \to P \to 0
\end{equation}
with a point module $P$. Modules of the form \eqref{resN} form an open set in the moduli spaces
of $\bA$-modules of rank $0$ and 
$$
(c_1,c_2) = (D,D(D+3)/2)\,, \quad D=\deg \Omega \,.
$$
Let $\Ms(c_1,c_2)$ denote a finite cover of this open set over which the ordering of
the summands in \eqref{bdryN} is chosen.

For general $c_2$, we define $\Ms(c_1,c_2)$ as 
the moduli space of $\bA$-modules with the same minimal resolution as the generic 
commutative resolution plus an ordering of boundary points. For example, for 
$$
(c_1,c_2) = (D,D(D+3)/2-k)\,, \quad 0\le k \le D/2\,,
$$
the corresponding modules $\cN''$ are of the form
$$
0\to \bA(-2)^{D-k} \to \bA^k \oplus\bA(-1)^{D-k} \to \cN'' \to 0 \,, 
$$
generalizing \eqref{resNw}. 

\subsection{Noncommutative shift on the Jacobian}

It is easy to see that 
$$
\dim \Ms(c_1,c_2) = c_1^2 + 1\,,
$$
and, in particular, it doesn't depend on $c_2$.

\begin{Lemma}
The correspondence 
$$
\{ (\cN',\cN) \} \subset \Ms(c_1,c_2+1) \times \Ms(c_1,c_2) 
$$
defined by \eqref{exactP} is the graph of a birational map. 
\end{Lemma}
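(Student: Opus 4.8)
The plan is to exhibit the correspondence $\{(\cN',\cN)\}$ as a graph in both directions, i.e.\ to show that $\cN$ determines $\cN'$ generically and conversely. For a fixed choice of which boundary segment is being moved, the exact sequence \eqref{exactP} presents $\cN'$ as the kernel of the unique surjection $\cN \to P$, where $P$ is the point module attached to the boundary in question. By Theorem \ref{bdryN}, the relevant point $P$ sits among the summands of $\partial_3\cN$ (or the analogous $\partial_i\cN$ for the other two slopes), and the extra ordering data carried by $\Ms(c_1,c_2)$ pins down exactly which summand. Once $P$ is named, $\Hom(\cN,P)$ is $1$-dimensional for generic $\cN$ — this is where I would use that $\cN$ has the generic resolution \eqref{resN} and that $P$ appears with multiplicity one in the boundary — so the surjection, hence its kernel $\cN'$, is canonically determined. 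That gives a rational map $\Ms(c_1,c_2) \dashrightarrow \Ms(c_1,c_2+1)$ defined on the locus where the resolution is generic and the boundary point is reduced.

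Next I would construct the inverse rational map. Starting from $\cN'$, which has a resolution of the form $0\to \bA(-2)^{D-1}\to \bA\oplus\bA(-1)^{D-1}\to\cN'\to 0$ (a specialization of \eqref{resNw} with the degree-$0$ generator forced by the moved boundary), one recovers $\cN$ as a suitable extension: the analysis in Sections \ref{swbdry} and \ref{sbbdry} shows that $x_3\cN \subset \cM'$, so $\cN \subset x_3^{-1}\cN'$ inside the localized module, and $\cN$ is cut out as the largest submodule of $x_3^{-1}\cN'$ still annihilated by $\bK$ that is torsion of the correct Hilbert polynomial. Concretely, $\cN/\cN'$ being the fixed point module $P$ means $\cN$ is one particular element of $\Ext^1(P,\cN')$, and I would argue this $\Ext$ group is again $1$-dimensional generically (Euler characteristic plus the vanishing $\Hom(P,\cN')=0$, since $P$ is not a submodule of $\cN'$ by Lemma \ref{noker} applied to $\cN'$), so the extension is unique up to scalar and $\cN$ is recovered. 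Composing the two maps gives the identity on a dense open set, so the correspondence is birational.

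The main obstacle will be the two $1$-dimensionality claims — $\dim\Hom(\cN,P)=1$ and $\dim\Ext^1(P,\cN')=1$ — on a dense open subset. The dimension count $\dim\Ms(c_1,c_2)=c_1^2+1$ independent of $c_2$ is consistent with the correspondence being birational, but it does not by itself rule out a finite-to-one or positive-dimensional generic fibre; one genuinely needs the multiplicity-one statement from Theorem \ref{bdryN}, namely that the $\deg\Omega$ boundary points are distinct for generic parameters, together with semicontinuity to propagate genericity of the resolution from the commutative point $q_{ij}=1$ (where everything is classical and the statement is the familiar fact that deforming a line bundle on a plane curve by moving a single intersection point with a fixed line is birational on the Jacobian) to generic $q$. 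I would handle this by a deformation argument: the correspondence is defined over the whole family in $q$, it is birational at $q_{ij}=1$ by the commutative picture sketched in Section \ref{sclass}, and birationality is an open condition in families of irreducible varieties of the same dimension, so it persists for generic $q$. The remaining bookkeeping — checking that $\Ms(c_1,c_2+1)$ and $\Ms(c_1,c_2)$ are irreducible of the same dimension so that "dominant in both directions" upgrades to "birational" — follows from the explicit resolution description of these moduli spaces.
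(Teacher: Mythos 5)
Your final paragraph is the paper's proof. The paper disposes of the lemma in one line: ``It suffices to consider the commutative case, when this becomes a shift by $P$ on the Jacobian of the curve $Q$.'' This is exactly your deformation argument: the correspondence is defined over the whole $q$-family, birationality is verified at $q_{ij}=1$ via the classical picture of Section \ref{sclass} (shift by a point of $Q$ on $\Jac(Q)$), and birationality propagates by openness once the moduli spaces are irreducible of equal dimension. You spell out the openness reasoning that the paper leaves implicit, which is fine.

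Your first two paragraphs attempt a direct construction of the rational map and its inverse in the noncommutative setting, via the claims $\dim\Hom(\cN,P)=1$ and $\dim\Ext^1(P,\cN')=1$ on a dense open set. This is a genuinely different (more hands-on) route, and you are right to flag the $1$-dimensionality claims as the sticking point; they are plausible but would require additional work (e.g.\ an Euler-characteristic computation together with vanishing of the other $\Ext$ groups, and a semicontinuity argument from the commutative fibre) that you do not carry out. Since you then observe that the commutative reduction handles genericity anyway, the direct argument ends up being dispensable scaffolding rather than a self-contained proof. If you intend it as the actual argument, the Hom/Ext dimension claims need to be proved, not just asserted; if you intend it as motivation, the proof is really just your last paragraph, which coincides with the paper's.

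One small caution on your inverse construction: recovering $\cN$ from $\cN'$ as ``the largest submodule of $x_3^{-1}\cN'$ annihilated by $\bK$ with the right Hilbert polynomial'' presupposes the boundary segment being moved is the horizontal ($x_3$) one; for the other two slopes one needs $\partial_1$ or $\partial_2$, as you note parenthetically earlier. This is a bookkeeping point but worth making uniform if you keep the direct argument.
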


\begin{proof}
It suffices to consider the commutative case, when this becomes a shift by $P$ on 
the Jacobian of the curve $Q$, see Section \ref{sclass}. 
\end{proof}

This means that we have an action of of a group $\bS\cong\Z^{3 c_1}$ on 
$
\bigsqcup_{c_2} \Ms(c_1,c_2)
$
by birational transformations. A subgroup of the form $\Z^{3 c_1-1}$ preserves $c_2$ and acts birationally 
on individual components. Perhaps the title of this subsection is the appropriate name for this
group action. 

Parallel group actions may be defined for other noncommutative surfaces. They turn out to encompass 
several previously studied discrete dynamical systems. This is the subject of a joint work by E.~Rains 
and the author, the results of which will appear in \cite{RO}.

\subsection{}

Renormalization is a central concept in mathematical physics. For any tileable domain $\Omega$ and 
any $m\in\Z_{>0}$, the scaled domain $m\Omega$ is again tileable and it is natural to ask how 
this scaling transformation affects our random surfaces. Equivalently, of course, one can keep $\Omega$
fixed and divide the mesh size by $m$. 

To quantify the word ``affects'', one tries to summarize 
the behavior of random surfaces in terms of finitely many essential degrees of freedom. One further 
hopes to define an action of the group $\R_{>0}$ on this space extending the scaling transformations
above. 

While this procedure is a very powerful guiding principle, in practice one usually has to use 
various approximations to make it work. This is only natural since one is trying to squeeze
an infinite-dimensional problem into a finite-dimensional dynamical system. 

\subsection{}

Our kind of problems are special in that they have a certain built-in finite-dimensionality, 
starting from a finite number segments that bound $\Omega$. The quantum limit shape $\cN$ 
also varies in a finite-dimensional moduli space. Since $\cN$ encodes the essential information 
about the correlation functions, the renormalization dynamics may be considered
understood once the scaling action on $\cN$ is determined. 

Clearly, scaling transformations are composed out of many noncommutative shifts on the Jacobian,
and more precisely we have 

\begin{Theorem} The scaling $\Omega\mapsto m\cdot\Omega$ preserves the $\bS$-orbit of 
$\cN$ and 
intertwines the action of $s\in \bS$
with the action of $m\cdot s\in \bS$. 
\end{Theorem}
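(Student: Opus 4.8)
The plan is to reduce the statement to an iterated application of the correspondence \eqref{exactP}, controlling at each step how a single boundary move of $\Omega$ translates into a single noncommutative shift on the Jacobian. First I would observe that the scaling $\Omega \mapsto m\cdot\Omega$ can be factored as a composition of elementary moves of the type considered in Sections \ref{swbdry} and \ref{sbbdry}: to pass from $\Omega$ to $m\cdot\Omega$ one pushes each boundary segment outward (or inward, depending on orientation) one lattice step at a time, and each such step is exactly one of the operations $\Omega \rightsquigarrow \Omega'$ producing the exact sequence $0\to\cN'\to\cN\to P\to 0$ with $P$ a point module identified explicitly by Lemmas \ref{Lwhbd} and \ref{Lblbd}. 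Each elementary move changes $c_2$ by $1$ and, by the previous Lemma, realizes one of the birational generators of $\bS\cong\Z^{3c_1}$; the key point is that after a full pass (all boundary segments moved by one step, i.e.\ $\Omega\mapsto 2\Omega$ in the linear scaling, or the appropriate combinatorial analog of a uniform dilation) we return to a domain with the same combinatorics and the same invariant $c_2$, so the net effect lies in the subgroup $\Z^{3c_1-1}$ acting on a single component $\Ms(c_1,c_2)$.

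The second step is to track which element of $\bS$ is produced. Here I would use Theorem \ref{bdryN}: the point modules appearing in the successive quotients $P$ are governed by the heights $a_i$ of the horizontal boundaries, and under $\Omega\mapsto m\cdot\Omega$ these heights scale linearly, $a_i \mapsto m\,a_i$ (up to an overall shift which is absorbed into the choice of origin / normalization of monomials). Correspondingly, in each of the three families of coordinate-axis boundary points — horizontal, and the two other slopes obtained by the cyclic symmetry — the divisor of boundary points is multiplied by $m$ in the natural additive coordinate on $\Jac$. Combined with the identification of $\bS$ as generated by the shifts-by-boundary-points, this shows precisely that the composite birational transformation attached to $\Omega\mapsto m\cdot\Omega$ is the one sending $s\in\bS$ to $m\cdot s$: scaling the domain scales the lattice $\bS$ of shifts by $m$, and hence intertwines the $s$-action with the $(m\cdot s)$-action. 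That $\cN$ stays in a single $\bS$-orbit is then immediate, since it is obtained from the scaled module by an element of $\bS$.

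I would organize the argument so that the only genuinely substantive input is the commutative specialization $q_{ij}=1$, exactly as in the proof of the preceding Lemma: there $\cN$ is $\iota_*\cL$ for $\cL\in\Jac_{g-1}(Q)$, the elementary moves are honest translations of $\cL$ by the point-module points on the coordinate lines, and the scaling $\Omega\mapsto m\cdot\Omega$ scales the curve $Q$ (and hence the distinguished points on its coordinate intersections) by $m$ in the group law. Since $\Ms(c_1,c_2)$ and the correspondence \eqref{exactP} are defined uniformly in $q$, and since a birational map of irreducible varieties is determined by its restriction to the generic fiber over $q$-space, the commutative computation propagates to generic $q$. This is the standard ``it suffices to consider the commutative case'' move already used twice in the paper.

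The main obstacle I anticipate is bookkeeping rather than conceptual: making the factorization of $\Omega\mapsto m\cdot\Omega$ into elementary steps compatible with the cyclic-order genericity hypothesis (so that every intermediate domain still has all three slopes and the Hilbert-function computations of Lemmas \ref{lemdimM}, \ref{lsurK} remain valid in the stable range), and checking that the intermediate domains, though nontileable at individual steps, never leave the range where $\cN'$ is defined and has the expected resolution. One must also verify that the order in which boundary segments are moved does not matter — i.e.\ that the various elementary shifts commute as birational maps — which follows from the commutativity of the group $\bS$ in the commutative model, but should be stated carefully. Once these combinatorial compatibilities are in place, the intertwining relation $s\mapsto m\cdot s$ is a direct consequence of the linear scaling of boundary heights under dilation.
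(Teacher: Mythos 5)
The paper does not actually supply a proof for this theorem; it follows immediately the one-line remark that ``scaling transformations are composed out of many noncommutative shifts on the Jacobian,'' and nothing more is given. Your overall strategy---factor $\Omega\mapsto m\cdot\Omega$ into the elementary boundary moves of Sections~\ref{swbdry}--\ref{sbbdry}, invoke the correspondence \eqref{exactP} at each step, and verify the group-theoretic content in the commutative specialization $q_{ij}=1$ exactly as in the surrounding Lemmas---is in line with that hint and is essentially the only reasonable route, so you are on the right track in spirit.

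That said, two specific points in your write-up do not hold up. First, the parenthetical ``all boundary segments moved by one step, i.e.\ $\Omega\mapsto 2\Omega$'' is false: moving each of the $3\deg\Omega$ boundary segments outward by one lattice step adds a unit collar to $\Omega$, increasing each side length by an additive constant, not doubling the domain. A true dilation $\Omega\mapsto m\Omega$ requires moving segment $i$ by an amount proportional to its distance from the chosen origin of dilation (so different segments move by different amounts), and it is this factorization into elementary moves, not repeated ``full passes,'' that you need; you hedge with ``or the appropriate combinatorial analog of a uniform dilation,'' but the claimed identity is simply wrong and should be excised. Second, your derivation of the intertwining from $a_i\mapsto m a_i$ is indirect and not quite what is wanted. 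The heights scaling tells you where $\cN(m\Omega)$ sits in the moduli space (and hence is the substance of the orbit claim), but the intertwining is a separate, essentially combinatorial statement: if $\Omega'$ is obtained from $\Omega$ by moving segment $i$ one step, then $m\Omega'$ is obtained from $m\Omega$ by moving segment $i$ by $m$ steps, so the square with vertical arrows the scaling maps and horizontal arrows $s$ (top) and $m\cdot s$ (bottom) commutes. Once stated this way it follows from the decomposition you already have, but conflating it with the divisor-scaling observation obscures what actually needs to be checked (including the point you rightly flag, that every intermediate domain in the $m$-step chain remains within the stable range). These are repairable bookkeeping issues rather than a wrong approach.
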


\subsection{}

If the parameter $q$ is adjusted simultaneously, then the limit $m\to\infty$ becomes the \emph{thermodynamic} limit
$$
\textup{mesh}\to 0, \quad \log q = O(\textup{mesh}) \,,
$$
which is the limit that we were planning to take all along. In this limit, noncommutative shifts of the Jacobian 
may be viewed as a perturbation of the commutative shift, thus joining a much-studied area of 
perturbations of integrable systems. Their analysis from this point of view will appear in \cite{KO}. 

In particular, it will be shown in \cite{KO} that, indeed the quantum limit shape $\cN$ is a deformation of 
the curve $Q$ that defines the (classical) limit shape.

\section{Outlook}

\subsection{}

The Kasteleyn operator on $\Gamma_6$ has an infinite-dimensional kernel and the Kasteleyn equation 
needs to be supplemented by boundary conditions in order to have a unique solution. By contrast, 
once a second difference equation of degree $d$ is known, the solutions form a $d$-dimensional linear space, spanned by modulated plane waves. 
This simple principle gives a powerful way to control $\bK^{-1}$ in the thermodynamic limit which 
is the basic analytic issue in the analysis of stepped surfaces. 

In fact, optimistically, one may expect these techniques to overcome the difficulties that lie
in the way of proving the CLT for stepped surfaces with polygonal boundaries (see \cite{Ken} for techniques
that can handle a different sort of boundary conditions) as well determining the local correlations. Further, 
since polygonal boundaries are dense in the space of all boundaries, the calculation of 
local correlations for them has direct implication to the classification of Gibbs measures (about which 
\cite{Shef} contains a wealth of information). 

\subsection{}

Kasteleyn theory and the formalism of \cite{Burg} work for any periodic 
bipartite planar dimer. It would be very interesting to study the quantum limit shape in this generality. 
It would be also very interesting to find applications to difference equations other than the Kasteleyn 
equation, such as e.g.\ discrete Dirac equations in dimensions $>2$.

\subsection{}

For very special kinds of boundary conditions (see e.g.\ \cite{EC,Seattle} for an introduction) the $q$-weighted 
stepped surface partitions functions $Z$ become generating functions $Z_{DT}$ for the 
Donaldson-Thomas invariants of toric CY three-folds\footnote{The customary parameter $q$ in DT theory differs from ours by a minus sign.}.

Very generally, for any smooth projective three-fold $X$ the expansion of $\log Z_{DT}$ in powers of $\log q$ was conjectured to generate the Gromov-Witten 
invariants of the same three-fold $X$, genus by genus  \cite{mnop}. For a toric three-fold $X$, this conjecture was proven in \cite{moop}. 

In particular, this relates the genus $0$ Gromov-Witten invariants of $X$ to the leading asymptotics
of $\log Z$ as $\log q\to 0$, and hence to the limit shape $Q$. \emph{Mirror symmetry}, which is certainly too complex
and multifaceted a phenomenon to be discussed here with any precision, associates
basically the same curve $Q$ to $X$. 
Thus the limit shape point of view puts mirror symmetry on a firm probabilistic ground in this particular instance. 

The quantum limit shape $\cN$ captures the fluctuations and hence all 
orders of the expansion of $\log Z$. This makes it a strong candidate for the as yet mysterious higher-genus 
mirror of $X$. Moreover, the quantum limit shape $\cN$, being a categorical object, might 
stand a better chance of generalization to non-toric $X$ than the underlying box-counting. 
Progress in this direction remains both very desirable and scarce.

\subsection{}
There exists a different (and, at present, conjectural) way to extract the 
higher genus GW invariants out of $Q$, which was proposed in \cite{BKMP} based
on the diagrammatic techiques developed in the random matrix context, 
see in particular \cite{CEO}. The use of noncommuting variables in a related 
context was advocated, in particular, in \cite{DHSV}. 

It is reasonable
to expect the two approaches to converge, especially since various 
random matrix models may naturally be viewed as continuous limits of 
stepped surfaces. For example, one sees random matrices quite 
directly near the points where $Q$ intersects the coordinate
 axes \cite{birth}. 
There are also numerous parallels between random matrices and 
Plancherel-like measures on partitions, which are induced on slices
of stepped surfaces \cite{uses}. 

A more ambitious goal may be to push the theory away from the 
$K_X=0$ case.  As a first problem in the $K_X\ne 0$ direction,
one can try the equivariant vertex \cite{mnop,moop}. As a random surface 
model, it is very nonlocal and otherwise distant from what is 
perceived as natural in statistical mechanics. To its credit, 
it has a map, due to Nekrasov \cite{Solvay}, onto certain local 2-dimensinal
lattice fermions. In contrast to Kasteleyn theory, these 
fermions are now interacting and it remains to be seen how 
much progress one can make in this more general setting.

\end{document}